\newtheorem{theorem}{Theorem}[section]
\newtheorem{definition}[theorem]{Definition}
\newtheorem{lemma}[theorem]{Lemma}
\newtheorem{corollary}[theorem]{Corollary}
\newtheorem{example}[theorem]{Example}
\newtheorem{proposition}[theorem]{Proposition}
\newcommand{\abs}[1]{\left\vert#1\right\vert}
\newcommand{\ceil}[1]{\left\lceil#1\right\rceil}
\newcommand{\limn}{\lim\limits_{n \to \infty}}
\newcommand{\sumn}{\sum\limits_{n = 0}^\infty}
\newcommand{\prodn}{\prod\limits_{n = 0}^\infty}
\newcommand{\N}{\mathbb{N}}
\newcommand{\R}{\mathbb{R}}
\newcommand{\norm}[1]{\left\lVert#1\right\rVert}
\newcommand{\set}[1]{\left\{#1\right\}}
\newcommand{\Id}{\operatorname{Id}}
\newcommand{\rConvProdBeta}{\sigma}
\newcommand{\rCauchyBeta}{\chi_\beta}
\newcommand{\rCauchyLambda}{\chi_\lambda}
\newcommand{\rConvBeta}{\eta}
\newcommand{\rCauchyT}{\chi_T}
\newcommand{\cConvProdBeta}{\operatorname{C}1_q}
\newcommand{\cCauchyBeta}{\operatorname{C}2_q}
\newcommand{\cCauchyLambda}{\operatorname{C}3_q}
\newcommand{\cConvBeta}{\operatorname{C}4_q}
\newcommand{\cLimInfLambda}{\operatorname{C}5_q}
\newcommand{\cCauchyT}{\operatorname{C}6_q}
\newcommand{\notTAR}[1]{\widetilde{#1}}
\renewcommand{\phi}{\varphi}
\title{Rates of asymptotic regularity of the Tikhonov-Mann iteration for families of mappings}
\author{Hora\c tiu Cheval${}^{a}$\\[2mm]
\footnotesize ${}^a$ Research Center for Logic, Optimization and Security (LOS), Department of Computer Science, \\
\footnotesize Faculty of Mathematics and Computer Science, University of Bucharest.\\
\footnotesize Academiei 14,  010014 Bucharest, Romania\\[1mm]
\footnotesize E-mail: horatiu.cheval@unibuc.ro
}
\date{}
\begin{document}

\maketitle 

\begin{abstract}
    In this paper we generalize the strongly convergent Krasnoselskii-Mann-type 
    iteration for families of nonexpansive mappings defined recently by Bo\c{t} and Meier in Hilbert spaces
    to the abstract setting of $W$-hyperbolic spaces and we compute effective rates 
    of asymptotic regularity for our generalization. 
    This also generalizes recent results by Leu\c{s}tean and the author on the Tikhonov-Mann iteration 
    from single mappings to families of mappings.
    \\[3mm]
    \noindent\textbf{Keywords:} Mann iteration, Rates of asymptotic regularity, Tikhonov regularization, Common fixed points, Proof mining
    \\[3mm]
    \noindent\textbf{Mathematics Subject Classification:} 47J25, 47H09, 03F10
\end{abstract}

\section{Introduction}

In \cite{BotMei21}, Bo\c{t} and Meier propose a strongly convergent 
Krasnoselskii-Mann-type iteration for finding a common fixed point of 
a family $(T_n : H \to H)$ of nonexpansive self-mappings of a Hilbert space.
They define the sequence $(x_n)$ by 
\begin{align}
    x_{n + 1} = (1 - \lambda_n) \beta_n x_n + \lambda_n T_n (\beta_n x_n), \label{eq:kmf}
\end{align}
where $x_0 \in H$ is an arbitrary starting point and $(\lambda_n)$, $(\beta_n)$ are sequences in $[0, 1]$.
Theorem~3.1 of \cite{BotMei21} states that, under some conditions on $(\lambda_n), (\beta_n)$ and $(T_n)$, 
\begin{align}
    \limn \norm{x_n - T_n x_n} = 0, \label{eq:kmf-T-ar}
\end{align}
and, furthermore, that if $(T_n)$ satisfies an additional 
asymptotic condition, then $(x_n)$ converges strongly to a common fixed point of $(T_n)$.
As a part of the proof of \cite[Theorem~3.1]{BotMei21}, it is also established that
\begin{align}
    \limn \norm{x_n - x_{n + 1}} = 0 \label{eq:kmf-ar}.
\end{align}

Properties \eqref{eq:kmf-ar} and \eqref{eq:kmf-T-ar} are called 
the asymptotic, respectively the $(T_n)$-asymptotic regularity of $(x_n)$, 
and are important notions in optimization and nonlinear analysis,
the first one going back to Browder and Petryshyn \cite{BroPet67} being later extended by Borwein, Reich and Shafrir \cite{BorReiSha92}.
Furthermore, they serve as key steps in many convergence proofs, including the one of \cite[Theorem~3.1]{BotMei21}.

The case when $(T_n)$ is constant in iteration \eqref{eq:kmf} was studied and proven strongly convergent by Yao, Zhou and Liou \cite{YaoZhoLio09} 
and recently by Bo\c{t}, Csetnek and Meier \cite{BotCseMei19}.
Also, the single operator case was generalized by Leu\c{s}tean and the author \cite{CheLeu22} 
to $W$-hyperbolic spaces, where quadratic rates of asymptotic regularity were obtained.
Even better, linear, rates were provided by Kohlenbach, Leu\c{s}tean and the author \cite{CheKohLeu23} in the same setting.
A further generalization in the single mapping case was introduced in \cite{DinPin21} 
under the name of alternating Halpern-Mann iteration, proven there to be strongly convergent in CAT(0) spaces,
and later also studied in $W$-hyperbolic spaces in \cite{LeuPin23}.

In this paper, we generalize iteration \eqref{eq:kmf} from Hilbert spaces 
to the much more abstract setting of $W$-hyperbolic space and prove that \eqref{eq:kmf-ar} and \eqref{eq:kmf-T-ar}
also hold for our generalization. Furthermore, our proofs are quantitative, 
providing explicit \emph{rates of asymptotic regularity} for the iteration.
Our results can also be viewed as a generalization of those in \cite{CheLeu22} 
from single mappings to families of mappings. 

The results in this paper are part of the program of \emph{proof mining} \cite{Koh08-book, Koh18} 
developed by Kohlenbach, which seeks to obtain new quantitative results via 
the proof-theoretical analysis of mathematical proofs.

\section{Preliminary notions}

\subsection{Quantitative notions}
First let us recall the quantitative notions in terms of which our main results will be expressed. 
Let $(a_n)$ be a sequence in a metric space $(X, d)$ and $a \in X$ be a point. 
A function $\phi : \N \to \N$ is a \emph{rate of convergence} for $(a_n)$ to $a$ if
\begin{align*}
    \forall k \in \N \forall n \geq \phi(k) \left(d(a_n, a) \leq \frac{1}{k + 1}\right).    
\end{align*}
A function $\chi : \N \to \N$ is a \emph{Cauchy modulus} for $(a_n)$ if 
\begin{align*}
    \forall k \in \N \forall n \geq \chi(k) \forall j \in \N \left(d(a_n, a_{n + j}) \leq \frac{1}{k + 1}\right).
\end{align*}
A \emph{rate of asymptotic regularity} for $(a_n)$ is a rate of convergence to $0$ for 
the sequence $(d(x_n, x_{n + 1}))$. 
Given a mapping $T : X \to X$, a rate of $T$-asymptotic regularity 
for $(x_n)$ is a rate of convergence to $0$ for $(d(x_n, T x_n))$,
and given a family $(T_n : X \to X)$ of self-mappings of $X$, 
a rate of $(T_n)$-asymptotic regularity of $(x_n)$ is a rate of convergence to $0$ for the sequence $(d(x_n, T_n x_n))$.

A rate of divergence for a series $\sumn b_n$ of nonnegative real numbers 
is a function $\theta : \N \to \N$ such that 
\begin{align*}
    \forall k \in \N \left(\sum_{n = 0}^{\theta(k)} b_n \geq k\right).
\end{align*}

\subsection{$W$-hyperbolic spaces}

Following \cite{CheLeu22}, we say that a \emph{$W$-space} is a metric space $(X, d)$ endowed with a mapping 
$W : X \times X \times [0, 1] \to X$. 
The intended interpretation for $W(x, y, \lambda)$ is that of an abstract convex combination of parameter $\lambda$
between the two points $x$ and $y$. Hence, instead of $W$, throughout the paper we will use 
the notation
\begin{align*}
    (1 - \lambda) x + \lambda y = W(x, y, \lambda).
\end{align*}

\begin{definition}
    A $W$-space $(X, d, W)$ is said to be a \emph{$W$-hyperbolic space} if it satisfies the following axioms,
    for all $x, y, z, w \in X$ and $\lambda, \theta \in [0, 1]$: \\[2mm]
    \begin{tabular}{lllll}
        (W1) & $d(z, (1 - \lambda) x + \lambda y) \leq (1 - \lambda) d(z, x) + \lambda d(z, y)$, \label{W1} \\[2mm] 
        (W2) & $d((1 - \lambda) x + \lambda y, (1 - \theta) x + \theta y) = \abs{\lambda - \theta} d(x, y)$, \label{W2}\\[2mm] 
        (W3) & $(1 - \lambda) x + \lambda y = \lambda y + (1 - \lambda) x$, \\[2mm] 
        (W4) & $d((1 - \lambda) x + \lambda z, (1 - \lambda) y + \lambda w) \leq (1 - \lambda) d(x, y) + \lambda d(z, w)$. \\[2mm]
    \end{tabular}  
\end{definition}
Takahashi \cite{Tak70} already studied $W$-spaces satisfying (W1), while full $W$-hyperbolic spaces were 
introduced by Kohlenbach \cite{Koh05}.
Examples of $W$-hyperbolic spaces include all normed spaces,
as well as structures from geodesic geometry, such as Busemann spaces \cite{Pap05} and CAT(0) spaces \cite{AleKapPet19, BriHae99}.

Throughout this paper, unless otherwise mentioned, $(X, d, W)$ is a $W$-hyperbolic space.
\begin{proposition}\label{prop:W-derived}
    The following hold, for all $x, y, z, w \in X$ and $\lambda, \theta \in [0, 1]$.
    \begin{enumerate}
        \item \label{prop:W-dervied:dist-to-endpoint-left} \label{prop:W-dervied:dist-to-endpoint-right} 
        $d(x, (1 - \lambda) x + \lambda y) = \lambda d(x, y)$ and $d(y, (1 - \lambda) x + \lambda y) = (1 - \lambda) d(x, y)$;
        \item \label{prop:W-derived:general}
        $d((1 - \lambda) x + \lambda z, (1 - \theta) y + \theta w) \leq (1 - \lambda) d(x, y) + \lambda d(z, w) + \abs{\lambda - \theta}d(y, w)$;
        \item \label{prop:W-derived:common-left-endpoint}
        $d((1 - \lambda) x + \lambda z, (1 - \theta) x + \theta w) \leq \lambda d(z, w) + \abs{\lambda - \theta}d(x, w)$.
    \end{enumerate}
\end{proposition}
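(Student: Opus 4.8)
The plan is to reduce everything to the four axioms (W1)--(W4), treating part (iii) as a degenerate instance of part (ii). Before anything else I would pin down the one fact the axioms leave implicit, namely that the extreme parameters recover the endpoints. Setting $\lambda = 0$ in (W1) gives $d(z, (1-0)x + 0y) \leq d(z,x)$ for every $z$, and taking $z = x$ forces $(1-0)x + 0y = x$; symmetrically, $\lambda = 1$ in (W1) yields $(1-1)x + 1y = y$. These two identities are what allow me to invoke (W2) against a degenerate convex combination in the next step.

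For part (i) I would write $x = (1-0)x + 0y$ and apply (W2) with second parameter $0$, obtaining $d(x, (1-\lambda)x + \lambda y) = \abs{0 - \lambda}\, d(x,y) = \lambda d(x,y)$. The companion identity follows the same way from $y = (1-1)x + 1y$ together with (W2) with second parameter $1$, which gives $\abs{1 - \lambda}\, d(x,y) = (1-\lambda)d(x,y)$.

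For part (ii) the idea is to interpolate through the auxiliary point $(1-\lambda)y + \lambda w$, chosen so that it shares the weight $\lambda$ with the left-hand combination and the endpoints $y, w$ with the right-hand one. The triangle inequality then splits the distance into $d((1-\lambda)x + \lambda z,\, (1-\lambda)y + \lambda w)$, bounded by $(1-\lambda)d(x,y) + \lambda d(z,w)$ via (W4), plus $d((1-\lambda)y + \lambda w,\, (1-\theta)y + \theta w)$, which equals $\abs{\lambda - \theta}\, d(y,w)$ by (W2); adding the two yields exactly the claimed bound. Finally, part (iii) is the special case $y = x$ of part (ii), since then the leading term $(1-\lambda)d(x,x)$ vanishes.

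I do not anticipate a genuine obstacle here: the only points demanding any care are the implicit endpoint convention, which must be extracted from (W1) rather than assumed, and the selection of the correct interpolating point in part (ii) so that (W4) and (W2) apply cleanly without leftover terms.
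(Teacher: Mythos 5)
Your proof is correct, and since the paper itself only cites \cite[Lemma~2.1]{CheLeu22} for this proposition, your self-contained argument is in fact more explicit than what appears here; it follows the standard derivation used in that reference (endpoint identities extracted from (W1), then (W2) for part (i), and a triangle inequality through $(1-\lambda)y+\lambda w$ combined with (W4) and (W2) for part (ii), with part (iii) as the case $y=x$). No gaps: the care you take to justify $(1-0)x+0y=x$ and $(1-1)x+1y=y$ from (W1) before invoking (W2) is exactly the point that is usually glossed over.
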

\begin{proof}
    See \cite[Lemma~2.1]{CheLeu22}.
\end{proof}

\section{Main results}

Let $(T_n : X \to X)$ be a sequence of nonexpansive operators (i.e. $d(T_n x, T_n y) \leq d(x, y)$ for $x, y \in X$),
$(\lambda_n)$ and $(\beta_n)$ be sequences in $[0, 1]$,
and $x_0, u \in X$ be two arbitrary points. 
We define the \emph{Tikhonov-Mann iteration} associated to the family $(T_n)$
with parameters $(\lambda_n), (\beta_n)$, anchor point $u$ and starting point $x_0$ 
by
\begin{align}
    x_{n + 1} &= (1 - \lambda_n) u_n + \lambda_n T_n u_n, \quad \text{where} \label{eq:dfn-tmf-x}\\ 
    u_n &= (1 - \beta_n) u + \beta_n x_n. \label{eq:dfn-tmf-u} 
\end{align}
If $W$ is a Hilbert space, with the choice $u = 0$ we recover the iteration from \cite{BotMei21}
and if $(T_n)$ is a constant sequence we get the Tikhonov-Mann iteration from \cite{CheLeu22}. 

We will consider the following quantitative conditions on the parameters of the iteration. 

\begin{tabular}{lll}
    $(\cConvProdBeta)$ & $\prod\limits_{n = 0}^\infty \beta_{n + 1} = 0$ with rate of convergence $\rConvProdBeta$, \\[3mm] 
    $(\cCauchyBeta)$ & $\sum\limits_{n = 0}^\infty\abs{\beta_{n + 1} - \beta_n}$ is convergent with Cauchy modulus $\rCauchyBeta$, \\[3mm] 
    $(\cCauchyLambda)$ & $\sum\limits_{n = 0}^\infty\abs{\lambda_{n + 1} - \lambda_n}$ is convergent with Cauchy modulus $\rCauchyLambda$, \\[3mm] 
    $(\cConvBeta)$ & $\limn \beta_n = 1$ with rate of convergence $\rConvBeta$, \\[3mm] 
    $(\cLimInfLambda)$ & $\Lambda \in \N^*$ and $N_\Lambda \in \N$ are such that $\lambda_n \geq \frac{1}{\Lambda}$ for all $n \geq N_\Lambda$, \\[3mm]
    $(\cCauchyT)$ & $\sum\limits_{n = 0}^\infty d(T_{n + 1} u_n, T_n u_n)$ is convergent with Cauchy modulus $\rCauchyT$. \\[3mm]
\end{tabular}

These are quantitative analogues of the conditions from \cite[Theorem~2.1]{BotMei21}, 
with the caveat that the condition that $\sumn (1 - \beta_n) = \infty$ used 
in that paper is replaced with the equivalent, when $\beta_n > 0$, condition that $\prodn \beta_{n + 1} = 0$.
The reason we choose this reformulation is that it allows us to get better 
rates of asymptotic regularity,
as observed first by Kohlenbach \cite{Koh11}, who used it to obtain 
polynomial rates of asymptotic regularity for the Halpern iteration for the first time. 





For a mapping $T : X \to X$, let us denote by $\operatorname{Fix}(T) = \set{x \in X \mid Tx = x}$ 
its set of fixed points.  
In the rest of this paper, let $F = \bigcap\limits_{n \in \N} \operatorname{Fix}(T_n)$ be 
the set of common fixed points of the family $(T_n)$, and assume it to be nonempty.
The following lemma provides some useful upper bounds on the sequences involved.
\begin{lemma}\label{lem:basic-bounds}
    Let $p \in F$ be a common fixed point.
    Define 
    \begin{align}
        M = \ceil{\max\set{d(x_0, p), d(u, p)}}. \label{eq:dfn-max}       
    \end{align}
    Then, for all $n \in \N$, the following bounds hold:
    \begin{enumerate}
        \item $d(x_n, p) \leq M$ \label{lem:basic-bounds:x-p} and $d(x_n, u) \leq 2M$; \label{lem:basic-bounds:x-u}
        \item $d(u_n, p) \leq M$ \label{lem:basic-bounds:u-p} and $d(u_n, T_n u_n) \leq 2M$. \label{lem:basic-bounds:Tu-u}
    \end{enumerate}
\end{lemma}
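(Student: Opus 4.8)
**The plan is to prove the four bounds by induction on $n$, establishing them in a chained order so that each uses the previously proven ones.** The base case $d(x_0, p) \leq M$ is immediate from the definition of $M$ in \eqref{eq:dfn-max}. For the inductive step, I would proceed through the bounds in the following order at each stage: first bound $d(u_n, p)$ using the current $d(x_n, p)$, then bound $d(x_{n+1}, p)$ using $d(u_n, p)$, and separately derive the auxiliary bounds $d(x_n, u)$ and $d(u_n, T_n u_n)$ from the primary ones.

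For the estimate $d(u_n, p) \leq M$, I would use the definition \eqref{eq:dfn-tmf-u}, namely $u_n = (1 - \beta_n) u + \beta_n x_n$, together with axiom (W1). This gives
\begin{align*}
    d(u_n, p) \leq (1 - \beta_n) d(u, p) + \beta_n d(x_n, p) \leq (1 - \beta_n) M + \beta_n M = M,
\end{align*}
where I use $d(u, p) \leq M$ from the definition of $M$ and the inductive hypothesis $d(x_n, p) \leq M$. Next, for $d(x_{n+1}, p) \leq M$, I would apply (W1) to the definition \eqref{eq:dfn-tmf-x} of $x_{n+1}$, then exploit that $p \in F$ so $T_n p = p$, and that each $T_n$ is nonexpansive. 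Concretely,
\begin{align*}
    d(x_{n+1}, p) \leq (1 - \lambda_n) d(u_n, p) + \lambda_n d(T_n u_n, p) = (1 - \lambda_n) d(u_n, p) + \lambda_n d(T_n u_n, T_n p) \leq d(u_n, p) \leq M,
\end{align*}
using nonexpansiveness in the form $d(T_n u_n, T_n p) \leq d(u_n, p)$ and the just-established bound on $d(u_n, p)$. This completes the induction for the two primary bounds.

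The two auxiliary bounds follow from the primary ones by the triangle inequality. For $d(x_n, u)$, I would write $d(x_n, u) \leq d(x_n, p) + d(p, u) \leq M + M = 2M$. For $d(u_n, T_n u_n)$, the same idea gives $d(u_n, T_n u_n) \leq d(u_n, p) + d(p, T_n u_n) = d(u_n, p) + d(T_n p, T_n u_n) \leq 2 d(u_n, p) \leq 2M$, again using $p = T_n p$ and nonexpansiveness. I do not anticipate a genuine obstacle here, as the argument is a routine induction; the only point requiring mild care is organizing the dependencies so that within each inductive step $d(u_n, p)$ is bounded \emph{before} it is reused in the estimate for $d(x_{n+1}, p)$, and remembering to invoke $p \in F$ at precisely the two places where $T_n$ appears.
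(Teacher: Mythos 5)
Your proof is correct and is essentially the argument the paper intends: the paper simply defers to \cite[Lemma~3.1]{CheLeu22} with $T$ replaced by $T_n$, and that argument is exactly the simultaneous induction you spell out, using (W1), the fact that $p \in F$ gives $T_n p = p$, and nonexpansiveness, with the auxiliary bounds following by the triangle inequality.
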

\begin{proof} 
    All the inequalities are easily proved by adapting the proofs of \cite[Lemma~3.1]{CheLeu22},
    replacing $T$ with $T_n$.
\end{proof}

The following lemma establishes the main recursive inequality on $(x_n)$ 
that will allow us to obtain its asymptotic regularity.
\begin{lemma}\label{lem:rec-bounds-ar}
    Let $p \in F$ and $M$ be defined by \eqref{eq:dfn-max}.
    For all $n \in \N$, the following hold:
    \begin{align}
        d(u_{n + 1}, u_n) \leq&\ \ \beta_{n + 1} d(x_{n + 1}, x_n) + 2 M \abs{\beta_{n + 1} - \beta_n}; \label{lem:rec-bounds:u-succ-u} \\ 
        \begin{split}
            d(x_{n + 2}, x_{n + 1}) \leq&\ \ \beta_{n + 1}d(x_{n + 1}, x_n) + d(T_{n + 1} u_n, T_n u_n) \\ 
                &+ 2M(\abs{\lambda_{n + 1} - \lambda_n} + \abs{\beta_{n + 1} - \beta_n}) 
        \end{split} \label{lem:rec-bounds:x-succ-x}
    \end{align}
        
\end{lemma}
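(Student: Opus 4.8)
The plan is to derive both inequalities by a single application of one of the convexity estimates in Proposition~\ref{prop:W-derived}, followed by the uniform bounds of Lemma~\ref{lem:basic-bounds}. For the bound \eqref{lem:rec-bounds:u-succ-u}, I would observe that by definition \eqref{eq:dfn-tmf-u} the points $u_{n+1} = (1 - \beta_{n+1}) u + \beta_{n+1} x_{n+1}$ and $u_n = (1 - \beta_n) u + \beta_n x_n$ are convex combinations sharing the common left endpoint $u$. Applying the common-endpoint estimate, Proposition~\ref{prop:W-derived}, part~\ref{prop:W-derived:common-left-endpoint}, with $x = u$, $z = x_{n+1}$, $w = x_n$, $\lambda = \beta_{n+1}$ and $\theta = \beta_n$, directly gives
\[
    d(u_{n+1}, u_n) \leq \beta_{n+1}\, d(x_{n+1}, x_n) + \abs{\beta_{n+1} - \beta_n}\, d(u, x_n),
\]
and substituting $d(x_n, u) \leq 2M$ from Lemma~\ref{lem:basic-bounds}, part~\ref{lem:basic-bounds:x-u}, yields \eqref{lem:rec-bounds:u-succ-u}.

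For \eqref{lem:rec-bounds:x-succ-x}, the points $x_{n+2} = (1 - \lambda_{n+1}) u_{n+1} + \lambda_{n+1} T_{n+1} u_{n+1}$ and $x_{n+1} = (1 - \lambda_n) u_n + \lambda_n T_n u_n$ are, by \eqref{eq:dfn-tmf-x}, convex combinations whose endpoints now all differ, so I would use the general estimate, Proposition~\ref{prop:W-derived}, part~\ref{prop:W-derived:general}, with $x = u_{n+1}$, $z = T_{n+1} u_{n+1}$, $y = u_n$, $w = T_n u_n$, $\lambda = \lambda_{n+1}$ and $\theta = \lambda_n$, to obtain
\[
    d(x_{n+2}, x_{n+1}) \leq (1 - \lambda_{n+1})\, d(u_{n+1}, u_n) + \lambda_{n+1}\, d(T_{n+1} u_{n+1}, T_n u_n) + \abs{\lambda_{n+1} - \lambda_n}\, d(u_n, T_n u_n).
\]

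The crux --- and the one place where the family-of-mappings setting genuinely departs from the single-operator analysis of \cite{CheLeu22} --- is the mixed term $d(T_{n+1} u_{n+1}, T_n u_n)$, in which both the argument and the operator change simultaneously. I would handle it by inserting the intermediate point $T_{n+1} u_n$ and using the triangle inequality together with the nonexpansiveness of $T_{n+1}$,
\[
    d(T_{n+1} u_{n+1}, T_n u_n) \leq d(T_{n+1} u_{n+1}, T_{n+1} u_n) + d(T_{n+1} u_n, T_n u_n) \leq d(u_{n+1}, u_n) + d(T_{n+1} u_n, T_n u_n),
\]
which is exactly what produces the new summand $d(T_{n+1} u_n, T_n u_n)$ appearing in the statement. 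Substituting this back makes the coefficient of $d(u_{n+1}, u_n)$ collapse to $(1 - \lambda_{n+1}) + \lambda_{n+1} = 1$; then bounding $\lambda_{n+1} \leq 1$ in front of $d(T_{n+1} u_n, T_n u_n)$ and using $d(u_n, T_n u_n) \leq 2M$ from Lemma~\ref{lem:basic-bounds}, part~\ref{lem:basic-bounds:Tu-u}, leaves
\[
    d(x_{n+2}, x_{n+1}) \leq d(u_{n+1}, u_n) + d(T_{n+1} u_n, T_n u_n) + 2M\abs{\lambda_{n+1} - \lambda_n}.
\]
Finally, feeding in the already-established inequality \eqref{lem:rec-bounds:u-succ-u} for $d(u_{n+1}, u_n)$ and collecting the two $2M$-terms gives \eqref{lem:rec-bounds:x-succ-x}. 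I expect no step to be technically difficult; the only genuine decision is to split the mixed term through $T_{n+1} u_n$ rather than $T_n u_{n+1}$, so that nonexpansiveness is applied to $T_{n+1}$ and the residual operator-discrepancy term is precisely the quantity later controlled by hypothesis $(\cCauchyT)$.
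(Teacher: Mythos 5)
Your proof is correct and takes essentially the same route as the paper's: Proposition~\ref{prop:W-derived}.\ref{prop:W-derived:common-left-endpoint} plus Lemma~\ref{lem:basic-bounds} for \eqref{lem:rec-bounds:u-succ-u}, and Proposition~\ref{prop:W-derived}.\ref{prop:W-derived:general} followed by splitting the mixed term through $T_{n+1}u_n$ and using the nonexpansiveness of $T_{n+1}$ for \eqref{lem:rec-bounds:x-succ-x}, then feeding in the first inequality. (Your intermediate coefficient $\lambda_{n+1}$ on the mixed term is in fact the correct instantiation of that proposition --- the paper writes $\lambda_n$ at that step, a harmless slip since the coefficient is anyway bounded by $1$ before the final estimate.)
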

\begin{proof}\mbox{}
    The proofs follow those of \cite[Proposition~3.2.(6), (7)]{CheLeu22}.
    For \eqref{lem:rec-bounds:u-succ-u} we have that, for all $n \in \N$,
        \begin{align*}
            d(u_{n + 1}, u_n) 
            &\leq \beta_{n + 1} d(x_{n + 1}, x_n) + \abs{\beta_{n + 1} - \beta_n} d(u, x_n) 
            \quad\text{by Lemma \ref{prop:W-derived}.\ref{prop:W-derived:common-left-endpoint}}\\ 
            &\leq \beta_{n + 1} d(x_{n + 1}, x_n) + 2M\abs{\beta_{n + 1} - \beta_n} 
            \quad\text{from Lemma \ref{lem:basic-bounds}.\ref{lem:basic-bounds:x-u}}.
        \end{align*}

    For \eqref{lem:rec-bounds:x-succ-x}, let $n \in \N$. Then
        \begin{align*}
            d(x_{n + 2}, x_{n + 1}) 
            \leq&\ (1 - \lambda_{n + 1})d(u_{n + 1}, u_n) + \lambda_n d(T_{n + 1} u_{n + 1}, T_n u_n) \\
                &+ \abs{\lambda_{n + 1} - \lambda_n}d(u_n, T_n u_n)  
                \quad\text{by Lemma \ref{prop:W-derived}.\ref{prop:W-derived:general}} \\
            \leq&\ (1 - \lambda_{n + 1})d(u_{n + 1}, u_n) + \lambda_n d(T_{n + 1} u_{n + 1}, T_n u_n) \\ 
                &+ 2M\abs{\lambda_{n + 1} - \lambda_n}  
                \quad\text{by Lemma \ref{lem:basic-bounds}.\ref{lem:basic-bounds:Tu-u}} \\ 
            \leq&\ (1 - \lambda_{n + 1})d(u_{n + 1}, u_n) + 2M\abs{\lambda_{n + 1} - \lambda_n}  \\
                &+ \lambda_n (d(T_{n + 1} u_{n + 1}, T_{n + 1} u_n) + d(T_{n + 1} u_n, T_n u_n)) \\ 
            \leq&\ (1 - \lambda_{n + 1})d(u_{n + 1}, u_n) + 2M\abs{\lambda_{n + 1} - \lambda_n} \\ 
                & + \lambda_n (d(u_{n + 1}, u_n) + d(T_{n + 1} u_n, T_n u_n)) \\
                &\text{since $T_{n + 1}$ is nonexpansive} \\ 
            =& d(u_{n + 1}, u_n) + \lambda_n d(T_{n + 1} u_n, T_n u_n) + 2M\abs{\lambda_{n + 1} - \lambda_n} \\ 
            \leq&\ \beta_{n + 1}d(x_{n + 1}, x_n) + \lambda_n d(T_{n + 1} u_n, T_n u_n) \\ 
                &+ 2M(\abs{\lambda_{n + 1} - \lambda_n} + \abs{\beta_{n + 1} - \beta_n}) 
                \quad\text{from \eqref{lem:rec-bounds:u-succ-u}} \\ 
            \leq&\ \beta_{n + 1}d(x_{n + 1}, x_n) + d(T_{n + 1} u_n, T_n u_n) \\
                &+ 2M(\abs{\lambda_{n + 1} - \lambda_n} + \abs{\beta_{n + 1} - \beta_n}) \\ 
                &\text{because $0 \leq \lambda_n \leq 1$}.
        \end{align*}
\end{proof}

The next inequalities will be used to derive the $(T_n)$-asymptotic regularity of $(x_n)$,
and their proofs follows \cite[Proposition~3.2]{CheLeu22}.
\begin{lemma}\label{lem:rec-bounds-T-ar}
    For all $n \in \N$, the following hold:
    \begin{enumerate}
        \item $d(u_n, T_n x_n) \leq (1 - \beta_n) d(u, T_n x_n) + \beta_n d(x_n, T_n x_n)$;
        \label{lem:rec-bounds:u-Tx}
        \item $d(x_n, T_n x_n) \leq d(x_n, x_{n + 1}) + (1 - \beta_n) d(u, x_n) + (1 - \lambda_n) d(x_n, T_n x_n)$;
        \label{lem:rec-bounds:x-Tx}
        \item $\lambda_n d(x_n, T_n x_n) \leq d(x_n, x_{n + 1}) + 2M(1 - \beta_n)$.
        \label{lem:rec-bounds:x-Tx-with-lambda}
    \end{enumerate}
\end{lemma}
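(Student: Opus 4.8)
The plan is to prove the three inequalities in order, since the second feeds directly into the third and the first is used inside the second. For \ref{lem:rec-bounds:u-Tx} I would simply unfold the definition $u_n = (1 - \beta_n) u + \beta_n x_n$ and apply axiom (W1) with $z = T_n x_n$, $x = u$, $y = x_n$ and $\lambda = \beta_n$. This gives $d(T_n x_n, (1 - \beta_n) u + \beta_n x_n) \leq (1 - \beta_n) d(T_n x_n, u) + \beta_n d(T_n x_n, x_n)$, which is exactly the claim after using the symmetry of $d$.

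For \ref{lem:rec-bounds:x-Tx} the starting point is the triangle inequality $d(x_n, T_n x_n) \leq d(x_n, x_{n+1}) + d(x_{n+1}, T_n x_n)$, so everything reduces to bounding the second summand. Expanding $x_{n+1} = (1 - \lambda_n) u_n + \lambda_n T_n u_n$ and applying (W1) with $z = T_n x_n$ yields $d(x_{n+1}, T_n x_n) \leq (1 - \lambda_n) d(u_n, T_n x_n) + \lambda_n d(T_n u_n, T_n x_n)$. I would then treat the two terms separately: by nonexpansiveness of $T_n$ together with Proposition \ref{prop:W-derived}.\ref{prop:W-dervied:dist-to-endpoint-right}, which gives $d(u_n, x_n) = (1 - \beta_n) d(u, x_n)$, the second term is at most $\lambda_n (1 - \beta_n) d(u, x_n)$; for the first term I would reinsert $x_n$ by the triangle inequality, $d(u_n, T_n x_n) \leq d(u_n, x_n) + d(x_n, T_n x_n) = (1 - \beta_n) d(u, x_n) + d(x_n, T_n x_n)$. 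Adding the two contributions, the coefficients of $(1 - \beta_n) d(u, x_n)$ recombine as $(1 - \lambda_n) + \lambda_n = 1$, leaving precisely $(1 - \beta_n) d(u, x_n) + (1 - \lambda_n) d(x_n, T_n x_n)$, which is the desired bound on $d(x_{n+1}, T_n x_n)$.

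Finally, \ref{lem:rec-bounds:x-Tx-with-lambda} follows by rearranging \ref{lem:rec-bounds:x-Tx}: subtracting the finite quantity $(1 - \lambda_n) d(x_n, T_n x_n)$ from both sides leaves $\lambda_n d(x_n, T_n x_n) \leq d(x_n, x_{n+1}) + (1 - \beta_n) d(u, x_n)$, and then bounding $d(u, x_n) \leq 2M$ via Lemma \ref{lem:basic-bounds}.\ref{lem:basic-bounds:x-u} gives the claim. None of these steps presents a genuine obstacle; the only point that needs slight care is the bookkeeping in \ref{lem:rec-bounds:x-Tx}, where the splitting of $d(u_n, T_n x_n)$ must be arranged so that the $(1 - \beta_n) d(u, x_n)$ contributions recombine cleanly and the self-referential term reappears with exactly the coefficient $1 - \lambda_n$, allowing the subtraction in \ref{lem:rec-bounds:x-Tx-with-lambda} to isolate $\lambda_n d(x_n, T_n x_n)$.
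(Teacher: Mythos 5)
Your proposal is correct and follows essentially the same route as the paper: the triangle inequality through $x_{n+1}$, axiom (W1) applied to the convex combination defining $x_{n+1}$, nonexpansiveness of $T_n$, the endpoint-distance formula $d(u_n, x_n) = (1-\beta_n) d(u, x_n)$, and the same recombination of coefficients, followed for (iii) by the identical rearrangement and the bound $d(u, x_n) \le 2M$. The only (harmless) differences are that you obtain (i) by a single application of (W1) where the paper inserts the intermediate point $(1-\beta_n)u + \beta_n T_n x_n$, and that in (ii) you bound $d(u_n, T_n x_n)$ directly by the triangle inequality through $x_n$ rather than by invoking (i) first; both yield the same bound $(1-\beta_n) d(u,x_n) + d(x_n, T_n x_n)$, so the bookkeeping coincides.
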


\begin{proof}\mbox{}
    \begin{enumerate}
        \item For all $n \in \N$,
        \begin{align*}
            d(u_n, T_n x_n) 
            \leq&\ d(u_n, (1 - \beta_n) u + \beta_n T_n x_n) + d((1 - \beta_n) u + \beta_n T_n x_n, T_n x_n) \\ 
            \leq&\ d(u_n, (1 - \beta_n) u + \beta_n T_n x_n) + (1 - \beta_n) d(u, T_n x_n) \\
            &\text{by Proposition \ref{prop:W-derived}.\ref{prop:W-dervied:dist-to-endpoint-right}} \\
            \leq&\ \beta_n d(x_n, T_n x_n) + (1 - \beta_n) d(u, T_n x_n) \quad\text{by (W4)}. 
        \end{align*}

        \item For all $n \in \N$,
        \begin{align*}
            d(x_n, T_n x_n) 
            \leq&\ d(x_n, x_{n + 1}) + d(x_{n + 1}, T_n x_n) \\
            \leq&\ d(x_n, x_{n + 1}) + (1 - \lambda_n) d(u_n, T_n x_n) + \lambda_n d(T_n u_n, T_n x_n) \\ 
                &\text{by (W1)} \\ 
            \leq&\ d(x_n, x_{n + 1}) + (1 - \lambda_n) d(u_n, T_n x_n) + \lambda_n d(u_n, x_n) \\ 
                &\text{by the nonexpansiveness of $T_n$} \\ 
            \leq&\ d(x_n, x_{n + 1}) + (1 - \lambda_n) (1 - \beta_n) d(u, T_n x_n) \\ 
                &+ (1 - \lambda_n) \beta_n d(x_n, T_n x_n) + \lambda_n d(u_n, x_n) \\ 
            =&\ d(x_n, x_{n + 1}) + (1 - \lambda_n) (1 - \beta_n) d(u, T_n x_n) \\ 
                &+ (1 - \lambda_n) \beta_n d(x_n, T_n x_n) + \lambda_n (1 - \beta_n) d(u, x_n) \\
                &\text{by (W4)} \\
            \leq&\ d(x_n, x_{n + 1}) + (1 - \lambda_n) (1 - \beta_n) (d(u, x_n) + d(x_n, T_n x_n)) \\ 
                &+ (1 - \lambda_n) \beta_n d(x_n, T_n x_n) + \lambda_n (1 - \beta_n) d(u, x_n) \\
            =&\ d(x_n, x_{n + 1}) + (1 - \beta_n) d(u, x_n) + (1 - \lambda_n) d(x_n, T_n x_n). 
        \end{align*}

        \item Starting from \ref{lem:rec-bounds:x-Tx}
        \begin{align*}
            d(x_n, T_n x_n)           &\leq d(x_n, x_{n + 1}) + (1 - \beta_n) d(u, x_n) + (1 - \lambda_n) d(x_n, T_n x_n), 
        \end{align*}
        move the last term to the left-hand side to get that
        \begin{align*}
            \lambda_n d(x_n, T_n x_n) &\leq d(x_n, x_{n + 1}) + (1 - \beta_n) d(u, x_n).             
        \end{align*}
        Apply Lemma \ref{lem:basic-bounds}.\ref{lem:basic-bounds:x-u} to get the conclusion.
    \end{enumerate}
\end{proof}

Finally, before the main theorems of the paper, 
let us give sufficient conditions for a family $(T_n)$ to satisfy $(\cCauchyT)$.

\newcommand{\cGamma}{\operatorname{C}7_q}
\newcommand{\cCauchyGamma}{\cGamma}
\newcommand{\rCauchyGamma}{{\chi_\gamma}}
\newcommand{\cLimInfGamma}{\operatorname{C}8_q}

\begin{proposition}\label{prop:cauchy-T-of-jP2-consequence}
    Let $(\gamma_n)$ be a sequence of positive reals satisfying the following conditions:
 
\begin{tabular}{lll}
    $(\cGamma)$ & $\sumn \abs{{\gamma_{n + 1} - \gamma_n}}$ is convergent with Cauchy modulus $\rCauchyGamma$; \\[3mm]
    $(\cLimInfGamma)$ & $\Gamma \in \N^*$ and $N_\Gamma \in \N$ are such that $\gamma_n \geq \frac{1}{\Gamma}$ for all $n \geq N_\Gamma$. \\[3mm]
\end{tabular}

    Suppose the family of operators $(T_n : X \to X)$ satisfies the following condition with respect to $(\gamma_n)$:
    for all $x \in X$ and $m, n \in \N$,
    \begin{align}
        d(T_m x, T_n x) \leq \frac{\abs{\gamma_{m} - \gamma_n}}{\gamma_n} d(T_n x, x). \label{eq:jP2-consequence}
    \end{align}

    Then, $(T_n)$ satisfies condition $(\cCauchyT)$ with $\chi_T$ given by
    \begin{align*}
        \rCauchyT(k) = \max\set{N_\Gamma, \rCauchyGamma(2M\Gamma(k + 1) - 1)}.
    \end{align*}
\end{proposition}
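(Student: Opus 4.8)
The plan is to reduce everything to a term-by-term comparison: I will bound each summand $d(T_{n+1} u_n, T_n u_n)$ of the series in $(\cCauchyT)$ by a fixed multiple of $\abs{\gamma_{n+1} - \gamma_n}$, and then inherit the Cauchy modulus supplied by $(\cGamma)$ for that comparison series, paying for the multiplicative constant by shifting the argument of $\rCauchyGamma$.

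First I would instantiate the hypothesis \eqref{eq:jP2-consequence} at $m = n + 1$ and $x = u_n$, obtaining
\begin{align*}
    d(T_{n+1} u_n, T_n u_n) \leq \frac{\abs{\gamma_{n+1} - \gamma_n}}{\gamma_n}\, d(T_n u_n, u_n).
\end{align*}
The two factors on the right are then controlled independently. Lemma~\ref{lem:basic-bounds}.\ref{lem:basic-bounds:Tu-u} gives $d(u_n, T_n u_n) \leq 2M$, while condition $(\cLimInfGamma)$ gives $\gamma_n \geq 1/\Gamma$, hence $1/\gamma_n \leq \Gamma$, for every $n \geq N_\Gamma$. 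Combining the two yields the term-wise estimate
\begin{align*}
    d(T_{n+1} u_n, T_n u_n) \leq 2M\Gamma \, \abs{\gamma_{n+1} - \gamma_n}, \qquad n \geq N_\Gamma.
\end{align*}

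It then remains to verify the defining Cauchy inequality. Fix $k \in \N$, let $N \geq \rCauchyT(k) = \max\set{N_\Gamma, \rCauchyGamma(2M\Gamma(k+1) - 1)}$, and take any $j \in \N$. Since $N \geq N_\Gamma$, every index in the range $N+1, \dots, N+j$ exceeds $N_\Gamma$, so the term-wise estimate applies throughout and
\begin{align*}
    \sum_{n = N+1}^{N+j} d(T_{n+1} u_n, T_n u_n) \leq 2M\Gamma \sum_{n = N+1}^{N+j} \abs{\gamma_{n+1} - \gamma_n}.
\end{align*}
Because $N \geq \rCauchyGamma(2M\Gamma(k+1) - 1)$, condition $(\cGamma)$ bounds the remaining tail by $\frac{1}{2M\Gamma(k+1)}$, and multiplication by $2M\Gamma$ collapses the estimate to $\frac{1}{k+1}$, which is exactly the property required of a Cauchy modulus $\rCauchyT$ for the series in $(\cCauchyT)$.

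I do not anticipate a genuine obstacle: the mathematical content is the single inequality \eqref{eq:jP2-consequence} together with the uniform bounds of Lemma~\ref{lem:basic-bounds}, and the only delicate point is the modulus arithmetic, namely converting an error level $\frac{1}{k+1}$ into $\frac{1}{2M\Gamma(k+1)}$, which is precisely what dictates the argument $2M\Gamma(k+1) - 1$ fed to $\rCauchyGamma$. One should merely record that $M$ is an integer by \eqref{eq:dfn-max} and $\Gamma \in \N^*$, so that $2M\Gamma(k+1) - 1 \in \N$ whenever $M \geq 1$; the degenerate case $M = 0$ forces $x_0 = u = p$ and makes every summand vanish, so $(\cCauchyT)$ then holds trivially.
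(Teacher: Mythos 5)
Your proof is correct and follows essentially the same route as the paper's: instantiate \eqref{eq:jP2-consequence} at $m = n+1$, $x = u_n$, bound the two resulting factors by $\Gamma$ and $2M$ via $(\cLimInfGamma)$ and Lemma~\ref{lem:basic-bounds}.\ref{lem:basic-bounds:Tu-u}, and then sum the tail using the Cauchy modulus $\rCauchyGamma$ at level $2M\Gamma(k+1)-1$. The only differences are cosmetic: your remark on the degenerate case $M = 0$ is a harmless extra precaution the paper omits, and your term-wise bound writes the increment correctly as $\abs{\gamma_{n+1}-\gamma_n}$ where the paper's displayed computation contains a typo ($\gamma_n$ in place of $\gamma_i$).
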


\begin{proof}
    Let $k, j \in \N$ and $n \geq \chi_T(k)$. Then,
    \begin{align*}
        \sum_{i = n + 1}^{n + j} d(T_{i + 1} u_i, T_i u_i)
        &\stackrel{\eqref{eq:jP2-consequence}}{\leq} \sum_{i = n + 1}^{n + j}\frac{\abs{{\gamma_n - \gamma_{i + 1}}}}{\gamma_i} d(u_i, T_i u_i) \\ 
        &\leq \Gamma \sum_{i = n + 1}^{n + j} {\abs{{\gamma_n - \gamma_{i + 1}}}} d(u_i, T_i u_i) \quad\text{by $(\cLimInfGamma)$}\\ 
        &\leq 2M\Gamma \sum_{i = n + 1}^{n + j} {\abs{{\gamma_n - \gamma_{i + 1}}}} \quad\text{by Lemma~\ref{lem:basic-bounds}.\ref{lem:basic-bounds:Tu-u}}\\ 
        &\leq 2M\Gamma \frac{1}{2M\Gamma(k + 1)} \quad\text{by $(\cCauchyGamma)$}\\ 
        &= \frac{1}{k + 1}
    \end{align*} 
\end{proof}

Conditions $(\cGamma)$ and $(\cLimInfGamma)$ are quantitative versions of 
those imposed in \cite[Theorem~3.1]{BotMei21} and are used there to derive \eqref{eq:jP2-consequence}.
\cite{LeuNicSip18} provides a large class of mappings satisfying Condition \eqref{eq:jP2-consequence}, named Condition $(C1)$ in that paper: 
it is shown there that if $X$ is a CAT(0) space and the family $(T_n)$ is \emph{jointly $(P_2)$ with respect to $(\gamma_n)$}, 
then $(T_n)$ satifies \eqref{eq:jP2-consequence}.

\subsection{General rates of asymptotic regularity}

\newcommand{\rCauchyTLambdaBeta}{\chi}
\newcommand{\notSum}[1]{{#1}}
\newcommand{\notProd}[1]{{{#1}}}
\newcommand{\rAR}{\Sigma}
\newcommand{\boundAwayFromZeroProd}{\psi}
\begin{theorem}\label{thm:general-ar-rates} 
    Let $p \in F$ be a common fixed point of $(T_n)$ 
    and $M$ be defined by \eqref{eq:dfn-max}.
    Furthermore, define  
    \begin{align}
        \rCauchyTLambdaBeta(k) = \max\set{\rCauchyT(2(k + 1) - 1), \rCauchyLambda(8M(k + 1) - 1), \rCauchyBeta(8M(k + 1) - 1)}. \label{eq:dfn-cauchy-T-Lambda-Beta}
    \end{align}
    Suppose conditions  $(\cConvProdBeta)$, $(\cCauchyBeta)$, $(\cCauchyLambda)$ and $(\cCauchyT)$ are satisfied
        and that $\psi_0 : \N \to \N^*$ is such that 
        \begin{align*}
            \frac{1}{\psi_0(k)} \leq \prod_{n = 0}^{\chi(3k + 2)} \beta_{n + 1}.
        \end{align*}
        Then $(x_n)$ is $(T_n)$-asymptotically regular with rate
        \begin{align*}
            \notProd{\rAR}(k) = \max\set{\rConvProdBeta(6M(k + 1)\psi_0(k) - 1), \rCauchyTLambdaBeta(3k + 2) + 1} + 1.
        \end{align*}
\end{theorem}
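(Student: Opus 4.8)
The plan is to reach the $(T_n)$-asymptotic regularity of $(x_n)$, that is $d(x_n, T_n x_n) \to 0$, through the weighted one-step estimate of Lemma~\ref{lem:rec-bounds-T-ar}.\ref{lem:rec-bounds:x-Tx-with-lambda},
\[
    \lambda_n d(x_n, T_n x_n) \leq d(x_n, x_{n+1}) + 2M(1 - \beta_n).
\]
Once $\lambda_n$ is bounded away from $0$ and the right-hand side is driven to $0$, this inequality delivers $d(x_n, T_n x_n) \to 0$; the term $2M(1 - \beta_n)$ being governed by the behaviour of $\beta_n$, the whole problem reduces to producing an explicit rate at which the displacement $d(x_n, x_{n+1})$ tends to $0$. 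Thus the substance of the argument is a quantitative analysis of the recursion controlling the displacement, and the conversion through the displayed inequality is the last, conceptually decisive, line that upgrades the conclusion to its $(T_n)$-version.

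For the displacement I would start from the recursive estimate~\eqref{lem:rec-bounds:x-succ-x}, which has the shape $a_{n+1} \leq \beta_{n+1} a_n + b_n$ with $a_n = d(x_{n+1}, x_n)$ and $b_n = d(T_{n+1} u_n, T_n u_n) + 2M(\abs{\lambda_{n+1} - \lambda_n} + \abs{\beta_{n+1} - \beta_n})$. Iterating from a cut point $N$ and bounding every error coefficient $\prod \beta_{i+1} \leq 1$ gives
\[
    a_n \leq \Bigl(\prod_{i = N}^{n-1} \beta_{i+1}\Bigr) a_N + \sum_{i = N}^{n-1} b_i,
\]
where $a_N \leq 2M$ by the triangle inequality and Lemma~\ref{lem:basic-bounds}. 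Choosing $N = \chi(3k+2)$ lets conditions $(\cCauchyT)$, $(\cCauchyLambda)$, $(\cCauchyBeta)$ --- assembled into $\chi$ by \eqref{eq:dfn-cauchy-T-Lambda-Beta} --- bound the tail $\sum_{i \geq N} b_i$ by $\frac{1}{3(k+1)}$, the budget being split as $\frac{1}{6(k+1)}$ for the $T$-differences and $\frac{1}{12(k+1)}$ each for the $\lambda$- and $\beta$-differences.

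The decisive contribution is the homogeneous term, and this is precisely where the hypothesis on $\psi_0$ enters. Since $\prod_{i=0}^{\chi(3k+2)} \beta_{i+1} \geq \frac{1}{\psi_0(k)}$, while for $n$ beyond $\rConvProdBeta(6M(k+1)\psi_0(k) - 1)$ the full product $\prod_{i=0}^{n-1} \beta_{i+1}$ is at most $\frac{1}{6M(k+1)\psi_0(k)}$, dividing the two collapses the tail product to $\prod_{i=N}^{n-1} \beta_{i+1} \leq \frac{1}{6M(k+1)}$; multiplying by $a_N \leq 2M$ bounds the homogeneous term by $\frac{1}{3(k+1)}$. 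Adding the two pieces yields $d(x_n, x_{n+1}) \leq \frac{2}{3(k+1)} \leq \frac{1}{k+1}$ for $n \geq \Sigma(k)$, the outer increments in $\Sigma$ absorbing the shift between the index of $a_n$ and the range of the product. Feeding this displacement rate back into Lemma~\ref{lem:rec-bounds-T-ar}.\ref{lem:rec-bounds:x-Tx-with-lambda} and dividing by $\lambda_n$ then produces the claimed $(T_n)$-asymptotic regularity of $(x_n)$.

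The main obstacle I anticipate is the balancing act in the product-splitting step: the cut point $N = \chi(3k+2)$ must be large enough for the three Cauchy moduli to squeeze the error tail below $\frac{1}{3(k+1)}$, yet $\psi_0$ must simultaneously certify that the partial product up to this \emph{same} $N$ is not too small, so that the reciprocal-product argument genuinely forces the homogeneous term down. Fitting the scaling choices $6M(k+1)\psi_0(k)$ in the product rate and $3k+2$ in the Cauchy moduli to the target $\frac{1}{k+1}$ is the delicate bookkeeping, while the $\lambda_n$-weighted inequality of Lemma~\ref{lem:rec-bounds-T-ar} is the device that transfers the displacement estimate to the distance $d(x_n, T_n x_n)$ itself.
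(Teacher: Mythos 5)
Your analysis of the displacement $d(x_n,x_{n+1})$ is correct and is in substance the paper's own argument: the paper verifies that $\rCauchyTLambdaBeta$ is a Cauchy modulus for $\sumn c_n$ with $c_n = d(T_{n+1}u_n,T_nu_n) + 2M(\abs{\lambda_{n+1}-\lambda_n}+\abs{\beta_{n+1}-\beta_n})$ and then invokes the quantitative Xu-type lemma \cite[Proposition~5.2.(ii)]{CheLeu22}, whose proof is precisely your unwinding of \eqref{lem:rec-bounds:x-succ-x} from the cut point $\rCauchyTLambdaBeta(3k+2)$: your budget split $\frac{1}{6(k+1)}+\frac{1}{12(k+1)}+\frac{1}{12(k+1)}=\frac{1}{3(k+1)}$ for the inhomogeneous tail is exactly what the definition \eqref{eq:dfn-cauchy-T-Lambda-Beta} evaluated at $3k+2$ delivers, and your $\psi_0$-quotient collapse of the tail product, with $a_N\leq 2M$, gives the matching $\frac{1}{3(k+1)}$ for the homogeneous term. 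Up to the conclusion $d(x_n,x_{n+1})\leq\frac{1}{k+1}$ for $n\geq\rAR(k)$, your proof is right and agrees with the paper's (the paper merely outsources the unwinding to the cited lemma).

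The final step, however, is a genuine gap --- albeit one provoked by a misprint in the statement. Dividing Lemma~\ref{lem:rec-bounds-T-ar}.\ref{lem:rec-bounds:x-Tx-with-lambda} by $\lambda_n$ and driving $2M(1-\beta_n)$ to $0$ requires exactly the two conditions this theorem does \emph{not} assume: $(\cLimInfLambda)$, giving $\frac{1}{\lambda_n}\leq\Lambda$ for $n\geq N_\Lambda$ (without it $\lambda_n$ may even vanish), and $(\cConvBeta)$, giving a rate $\rConvBeta$ for $\beta_n\to 1$; note that $(\cConvProdBeta)$ supplies no such rate, since $\prodn\beta_{n+1}=0$ does not even imply $\beta_n\to 1$. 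Consistently, the claimed rate $\rAR$ contains no $\Lambda$, $N_\Lambda$ or $\rConvBeta$, so it cannot serve as a rate for $d(x_n,T_nx_n)$; in fact the conclusion as you read it is false under the stated hypotheses: take $T_n=T$ constant, $\lambda_n=0$ and $\beta_n=\frac{n+1}{n+2}$, so that $(\cConvProdBeta)$, $(\cCauchyBeta)$, $(\cCauchyLambda)$ and $(\cCauchyT)$ all hold with trivial moduli, yet $x_{n+1}=u_n$ forces $x_n\to u$ and $d(x_n,Tx_n)\to d(u,Tu)\neq 0$ whenever the anchor $u$ is not a fixed point of $T$. The intended conclusion --- as the subsection title, the paper's proof, and Theorem~\ref{thm:general-Tn-ar-rates} confirm --- is that $\rAR$ is a rate of \emph{asymptotic} regularity, i.e.\ for $d(x_n,x_{n+1})$; the passage to $(T_n)$-asymptotic regularity is exactly Lemma~\ref{lem:ar-to-Tn-ar}, which assumes $(\cConvBeta)$ and $(\cLimInfLambda)$ and yields the modified rate $\notTAR{\rAR}$ of Theorem~\ref{thm:general-Tn-ar-rates}, not $\rAR$ itself. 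Deleting your last conversion step leaves a complete and correct proof of the intended statement.
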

\begin{proof}
    We will apply Proposition~5.2.(ii) of \cite{CheLeu22},
    which is a particular case of quantitative versions of a well-known Lemma by Xu \cite{Xu02}
    proved in \cite{KohLeu12, LeuPin21},
    with
    \begin{align*}
        s_n &= d(x_n, x_{n + 1}), \\ 
        a_n &= 1 - \beta_n, \\ 
        c_n &= d(T_{n + 1} u_n, T_n u_n) + 2M(\abs{\lambda_{n + 1} - \lambda_n} + \abs{\beta_{n + 1} - \beta_n}), \\ 
        L &= 2M. 
    \end{align*}

    We proceed to show that the conditions of that proposition are fulfilled.

    \textbf{Claim:} $\rCauchyTLambdaBeta$, as defined by \eqref{eq:dfn-cauchy-T-Lambda-Beta}, is a Cauchy modulus for $\sumn c_n$.

    \textbf{Proof of claim:}
    For brevity, denote 
    $\widehat{c}_n = \sum\limits_{i = 0}^{n} c_i$, 
    $\widehat{t}_n = \sum\limits_{i = 0}^{n} d(T_{i + 1} u_i, T_i u_i)$,
    $\widehat{\lambda}_n = \sum\limits_{i = 0}^{n} \abs{\lambda_{n + 1} - \lambda_n}$ and 
    $\widehat{\beta}_n = \sum\limits_{i = 0}^{n} \abs{\beta_{n + 1} - \beta_n}$,
    so that 
    \begin{align*}
        \widehat{c}_n = \widehat{t}_n + 2M(\widehat{\lambda}_n + \widehat{\beta}_n).   
    \end{align*}
    Let $k \in \N$ and $n \geq \rCauchyTLambdaBeta(k)$ and $j \in \N$.
    Given the definition of $\rCauchyTLambdaBeta$ and the fact that 
    $\rCauchyT$, $\rCauchyLambda$ and $\rCauchyBeta$ are Cauchy moduli for 
    $(\widehat{t}_n)$, $(\widehat{\lambda}_n)$ and $(\widehat{\beta}_n)$ respectively, we get that
    \begin{align*}
        \widehat{c}_{n + j} - \widehat{c}_n 
        &= \widehat{t}_{n + j} - \widehat{t}_n + 2M(\widehat{\lambda}_{n + j} - \widehat{\lambda}_n + \widehat{\beta}_{n + j} - \widehat{\beta}_n) \\ 
        &\leq \frac{1}{2(k + 1)} + 2M\left(\frac{1}{8M(k + 1)} + \frac{1}{8M(k + 1)}\right) \\  
        &= \frac{1}{k + 1}.
    \end{align*}
    
    \hfill $\blacksquare$ 

    This claim, together with Lemma \ref{lem:rec-bounds-ar}.\eqref{lem:rec-bounds:x-succ-x} shows that we are in the conditions to apply 
    \cite[Proposition~5.2.(ii)]{CheLeu22} and obtain our result.
\end{proof}

\subsection{General rates of $(T_n)$-asymptotic regularity}

The following lemma shows that in the presence of conditions $(\cConvBeta)$, $(\cLimInfLambda)$,
the asymptotic regularity of $(x_n)$ also implies its $(T_n)$-asymptotic regularity, with an explicit translation of rates.
\begin{lemma}\label{lem:ar-to-Tn-ar}
    Suppose $\phi$ is a rate of asymptotic regularity for $(x_n)$
    and assume that conditions $(\cConvBeta)$, $(\cLimInfLambda)$ hold. 
    Then, $\notTAR{\phi}$ defined by  
    \begin{align*}
        \notTAR{\phi}(k) = \max\set{N_\Lambda, \phi(2\Lambda(k + 1) - 1), \rConvBeta(4M\Lambda(k + 1) - 1)}
    \end{align*}
    is a rate of $(T_n)$-asymptotic regularity for $(x_n)$.
\end{lemma}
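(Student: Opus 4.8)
The plan is to read the desired $(T_n)$-asymptotic regularity estimate straight off the recursive inequality in Lemma~\ref{lem:rec-bounds-T-ar}.\ref{lem:rec-bounds:x-Tx-with-lambda}, namely
\[
    \lambda_n d(x_n, T_n x_n) \leq d(x_n, x_{n + 1}) + 2M(1 - \beta_n),
\]
and to isolate $d(x_n, T_n x_n)$ by dividing through by $\lambda_n$. This division is only legitimate once $\lambda_n$ is bounded away from $0$, which is precisely what $(\cLimInfLambda)$ supplies: for $n \geq N_\Lambda$ we have $\lambda_n \geq \frac{1}{\Lambda}$, so $\frac{1}{\lambda_n} \leq \Lambda$, and hence for such $n$
\[
    d(x_n, T_n x_n) \leq \Lambda\, d(x_n, x_{n + 1}) + 2M\Lambda(1 - \beta_n).
\]
The right-hand side is now a sum of two terms, each controlled by one of the hypotheses: the first by the assumed rate of asymptotic regularity $\phi$ for $(x_n)$, the second by the rate $\rConvBeta$ from $(\cConvBeta)$.

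First I would fix $k \in \N$ and $n \geq \notTAR{\phi}(k)$, and note that by definition $\notTAR{\phi}(k)$ dominates all three of $N_\Lambda$, $\phi(2\Lambda(k + 1) - 1)$ and $\rConvBeta(4M\Lambda(k + 1) - 1)$, so $n$ exceeds each threshold simultaneously. From $n \geq N_\Lambda$ I obtain the displayed bound above. From $n \geq \phi(2\Lambda(k + 1) - 1)$ and the definition of a rate of asymptotic regularity I get $d(x_n, x_{n + 1}) \leq \frac{1}{2\Lambda(k + 1)}$. From $n \geq \rConvBeta(4M\Lambda(k + 1) - 1)$ together with $(\cConvBeta)$ (and $\beta_n \leq 1$, so $1 - \beta_n = \abs{1 - \beta_n}$) I get $1 - \beta_n \leq \frac{1}{4M\Lambda(k + 1)}$.

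The final step is to substitute these three bounds. The arguments fed to $\phi$ and $\rConvBeta$ are chosen exactly so that each summand contributes half of the target error: the first gives $\Lambda \cdot \frac{1}{2\Lambda(k + 1)} = \frac{1}{2(k + 1)}$ and the second gives $2M\Lambda \cdot \frac{1}{4M\Lambda(k + 1)} = \frac{1}{2(k + 1)}$, so that $d(x_n, T_n x_n) \leq \frac{1}{k + 1}$, which is the claim. I do not expect a genuine obstacle here; the only point requiring care is the bookkeeping of constants, that is, splitting the allowed error $\frac{1}{k + 1}$ into two equal halves and back-solving for the correct inputs to $\phi$ and $\rConvBeta$, while keeping the $N_\Lambda$ threshold inside $\notTAR{\phi}$ so that the division by $\lambda_n$ remains valid throughout.
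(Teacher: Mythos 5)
Your proposal is correct and follows essentially the same route as the paper: both start from Lemma~\ref{lem:rec-bounds-T-ar}.\ref{lem:rec-bounds:x-Tx-with-lambda}, use $(\cLimInfLambda)$ to bound $\frac{1}{\lambda_n}$ by $\Lambda$ for $n \geq N_\Lambda$, and then feed the thresholds $\phi(2\Lambda(k+1)-1)$ and $\rConvBeta(4M\Lambda(k+1)-1)$ into the two summands so that each contributes $\frac{1}{2(k+1)}$. No gaps.
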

\begin{proof}
    Let $k \in \N$ and $n \geq \notTAR{\phi}(k)$. We need to show that $d(x_n, T_n x_n) \leq \frac{1}{k + 1}$.

    As $n \geq N_\Lambda$, Condition $(\cLimInfLambda)$ shows that $\lambda_n > 0$ and that
    \begin{align}
        \frac{1}{\lambda_n} \leq \Lambda. \label{eq:ar-to-Tn-ar:prf-lambda}
    \end{align}

    Since $n \geq \phi(2\Lambda(k + 1) - 1)$, the fact that $\phi$ is a rate of asymptotic regularity yields
    \begin{align}
        d(x_n, x_{n + 1}) \leq \frac{1}{2\Lambda(k + 1)}. \label{eq:ar-to-Tn-ar:prf-lambda-xn}
    \end{align}

    Finally, because $n \geq \rConvBeta(4M\Lambda(k + 1) - 1)$ and $\rConvBeta$ is a rate of convergence 
    for $\limn (1 - \beta_n) = 0$, we get
    \begin{align}
        1 - \beta_n \leq \frac{1}{4M\Lambda(k + 1)}. \label{eq:ar-to-Tn-ar:prf-lambda-beta}
    \end{align}

    From Lemma \ref{lem:rec-bounds-ar}.\ref{lem:rec-bounds:x-Tx-with-lambda}, we know that 
    \begin{align*}
        d(x_n, T_n x_n) \leq \frac{1}{\lambda_n} d(x_n, x_{n + 1}) + \frac{1}{\lambda_n} 2M(1 - \beta_n),
    \end{align*}
    which, together with \eqref{eq:ar-to-Tn-ar:prf-lambda}, \eqref{eq:ar-to-Tn-ar:prf-lambda-xn} and \eqref{eq:ar-to-Tn-ar:prf-lambda-beta} yields
    \begin{align*}
        d(x_n, T_n x_n) \leq \Lambda\frac{1}{2\Lambda(k + 1)}  + \Lambda 2M\frac{1}{4M\Lambda(k + 1)} = \frac{1}{k + 1}
    \end{align*}
    thus proving the claim.
\end{proof}

\begin{theorem}\label{thm:general-Tn-ar-rates}
    Suppose conditions $(\cConvProdBeta)$, $(\cCauchyBeta)$, $(\cCauchyLambda)$, $(\cConvBeta)$, $(\cLimInfLambda)$ and $(\cCauchyT)$ hold. 
    Let $\notProd{\rAR}$ be defined as in Theorem \ref{thm:general-ar-rates}.
    Then $(x_n)$ is $(T_n)$-asymptotically regular with rate 
    \begin{align}
        \notTAR{\notProd{\rAR}}(k) = \max\set{N_\Lambda, \notProd{\rAR}(2\Lambda(k + 1) - 1), \rConvBeta(4M\Lambda(k + 1) - 1)}.
    \end{align}



\end{theorem}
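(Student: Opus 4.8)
The plan is to combine the two pieces of machinery that have already been assembled, namely the rate of (plain) asymptotic regularity produced by Theorem~\ref{thm:general-ar-rates} and the rate-translation from asymptotic regularity to $(T_n)$-asymptotic regularity supplied by Lemma~\ref{lem:ar-to-Tn-ar}. The theorem is essentially a composition statement, so the proof is short: apply the first result to obtain a rate $\rAR$, then feed that rate as the input $\phi$ into the second result.

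\begin{proof}
    First I would invoke Theorem~\ref{thm:general-ar-rates}. The hypotheses $(\cConvProdBeta)$, $(\cCauchyBeta)$, $(\cCauchyLambda)$ and $(\cCauchyT)$ are exactly those assumed here, and $p \in F$ together with $M$ defined by \eqref{eq:dfn-max} are in force since $F$ is nonempty. Hence $\rAR$, as defined in Theorem~\ref{thm:general-ar-rates}, is a rate of asymptotic regularity for $(x_n)$; that is, it is a rate of convergence to $0$ for the sequence $(d(x_n, x_{n+1}))$.

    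Next I would apply Lemma~\ref{lem:ar-to-Tn-ar} with the choice $\phi = \rAR$. The lemma requires precisely that $\phi$ be a rate of asymptotic regularity for $(x_n)$, which has just been established, and that conditions $(\cConvBeta)$ and $(\cLimInfLambda)$ hold, which are among the hypotheses of the present theorem. The lemma then yields that $\notTAR{\rAR}$, obtained by substituting $\phi = \rAR$ into the formula
    \begin{align*}
        \notTAR{\phi}(k) = \max\set{N_\Lambda, \phi(2\Lambda(k + 1) - 1), \rConvBeta(4M\Lambda(k + 1) - 1)},
    \end{align*}
    is a rate of $(T_n)$-asymptotic regularity for $(x_n)$. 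This is exactly the formula in the statement, with $\rAR(2\Lambda(k+1)-1)$ in the middle slot, so the proof is complete.
\end{proof}

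There is no genuine obstacle here: all the analytic work — the recursive inequalities of Lemma~\ref{lem:rec-bounds-ar} and Lemma~\ref{lem:rec-bounds-T-ar}, the appeal to the quantitative Xu-type lemma of \cite{CheLeu22}, and the $\lambda_n$-division argument in Lemma~\ref{lem:ar-to-Tn-ar} — has already been carried out in the preceding results. The only thing to verify is that the hypotheses match up, which they do by inspection; the present theorem is a bookkeeping composition whose content lies entirely in making explicit the resulting closed-form rate.
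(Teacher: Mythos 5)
Your proposal is correct and is exactly the paper's proof: the paper disposes of this theorem in one line by applying Lemma~\ref{lem:ar-to-Tn-ar} together with Theorem~\ref{thm:general-ar-rates}, which is precisely the composition you spell out. The only detail worth a passing mention is that invoking Theorem~\ref{thm:general-ar-rates} also presupposes the existence of the function $\psi_0$ appearing in its hypotheses, which is implicitly carried along by the phrase ``let $\rAR$ be defined as in Theorem~\ref{thm:general-ar-rates}.''
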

\begin{proof}
    Apply Lemma \ref{lem:ar-to-Tn-ar} together with 
    Theorem \ref{thm:general-ar-rates}.
\end{proof}
Other than the parameters $(\lambda_n)$, $(\beta_n)$ of the iteration, 
the rates of ($(T_n)$-)asymptotic regularity we obtained depend weakly on the space $X$ and on the mappings $(T_n)$,
only via $M$ and $\rCauchyT$. The rates are less uniform compared to the rates obtained in the single mapping 
case in \cite{CheLeu22}, where only $M$ is present. Indeed, if $(T_n)$ is constant, 
then $\rCauchyT$ can simply be taken as $k \mapsto 0$. 
However, Proposition \ref{prop:cauchy-T-of-jP2-consequence} shows that for a large class of mappings, 
the dependence on $\rCauchyT$ can be reduced to one only on the real parameters $(\gamma_n)$.

In the following example we compute explicit rates for a concrete choice of $(\beta_n)$, $(\lambda_n)$ and $(\gamma_n)$.
The rates obtained for this example are the same as those obtained for the single mapping case in \cite[Corollary~4.3]{CheLeu22},
which demonstrates how under the hypotheses of Proposition~\ref{prop:cauchy-T-of-jP2-consequence},
the additional dependence on the familiy $(T_n)$ is eliminated.

\begin{example}\label{cor:example}
    Let $\lambda_n = \lambda \in (0, 1)$, $\beta_n = 1 - \frac{1}{n + 1}$ and $\gamma_n = 1 + \frac{1}{n + 1}$
    and consider the iteration $(x_n)$ given by \eqref{eq:dfn-tmf-x} with these parameters
    and let $(T_n)$ be a family of nonexpansive mappings satisfying 
    $\eqref{eq:jP2-consequence}$ with respect to $(\gamma_n)$.
    Then 
    \begin{enumerate}
        \item \label{cor:example-ar}
        $(x_n)$ is asymptotically regular with rate 
        \begin{align*}
            k \mapsto 144M^2(k + 1)^2 - 6M(k + 1);
        \end{align*}
        \item \label{cor:example-T-ar}
        $(x_n)$ is $(T_n)$-asymptotically regular with rate 
        \begin{align*}
            k \mapsto 576M^2\ceil{\frac{1}{\lambda}}^2(k + 1)^2 - 12M\ceil{\frac{1}{\lambda}}(k + 1).
        \end{align*}
    \end{enumerate}
\end{example}
\begin{proof}\mbox{}
    \begin{enumerate}
    \item 
        We will apply Theorem \ref{thm:general-ar-rates}. 
        Let us first show that its assumptions are fulfilled.
        Because
        \begin{align*}
            \prod_{i = 0}^n \beta_{n + 1} = \frac{1}{n + 2}, \quad \sum_{i = 0}^n \abs{\beta_{i + 1} - \beta_i} = 1 - \frac{1}{n + 2} \ \text{and} \ \sum_{i = 0}^n \abs{\lambda_{i + 1} - \lambda_i} = 0,
        \end{align*}
        we get that $(\cConvProdBeta)$, $(\cCauchyBeta)$ and $(\cCauchyLambda)$ are satified, respectively, with 
        \begin{align*}
            \rConvProdBeta(k) = k, \quad \rCauchyBeta(k) = k, \quad \rCauchyLambda(k) = 0.
        \end{align*}
        $(\cConvBeta)$ holds with 
        \begin{align*}
            \rConvBeta(k) = k.
        \end{align*}
        $(\cLimInfLambda)$ is satified with,
        \begin{align*}
            N_\Lambda = 0, \quad \Lambda = \ceil{\frac{1}{\lambda}}.
        \end{align*}
        Furthermore, as 
        \begin{align*}
            \sum_{i = 0}^n \abs{\gamma_{i + 1} - \gamma_i} = 1 - \frac{1}{n + 2},
        \end{align*}
        $(\gamma_n)$ fulfills $(\cCauchyGamma)$ and $(\cLimInfGamma)$ with 
        \begin{align*}
            \Gamma = 1, \quad N_\Gamma = 0 \quad \text{and} \quad \rCauchyGamma(k) = k.
        \end{align*}
        Thus, by Proposition \ref{prop:cauchy-T-of-jP2-consequence}, it follows that $(T_n)$ satisfies $(\cCauchyT)$ with 
        \begin{align*}
            \rCauchyT(k) = \max\set{N_\Gamma, 2M\Gamma(k + 1) - 1} = 2M(k + 1) - 1
        \end{align*}
        It follows that $\chi$ defined by \eqref{eq:dfn-cauchy-T-Lambda-Beta} is equal in this case to 
        \begin{align*}
            \chi(k) &= \max\set{\rCauchyT(2(k + 1) - 1), \rCauchyLambda(8M(k + 1) - 1), \rCauchyBeta(8M(k + 1) - 1)} \\ 
                    &= \max\set{4M((k + 1)) - 1, 0, 8M(k + 1) - 1} \\ 
                    &= 8M(k + 1) - 1
        \end{align*}
        Finally, $\psi_0$ can be taken as $\psi_0(k) = \chi(3k + 2) = 24M(k + 1) - 1$.
        Thus,
        \begin{align*}
            \notProd{\Sigma}(k) &= \max\set{\rConvProdBeta(6M(k + 1)\psi_0(k) - 1), \chi(3k + 2) + 1} + 1 \\ 
            &= 144M^2(k + 1)^2 - 6M(k + 1)
        \end{align*}
    \item 
        Apply Theorem \ref{thm:general-Tn-ar-rates} with the rate $\notProd{\Sigma}$ from \ref{cor:example-ar}, we get that 
        \begin{align*}
            \notTAR{\notProd{\Sigma}}(k) &= \max\set{N_\Lambda, \notProd{\Sigma}(2\Lambda(k + 1) - 1), \rConvBeta(4M\Lambda(k + 1) - 1)} \\ 
            &= 576M^2\ceil{\frac{1}{\lambda}}^2(k + 1)^2 - 12M\ceil{\frac{1}{\lambda}}(k + 1) \\ 
        \end{align*}
        
    \end{enumerate}

\end{proof}

\subsection{Linear rates of ($(T_n)$-)asymptotic regularity}

In this section, we compute linear rates of asymptotic regularity for iteration \eqref{eq:kmf}, 
by applying a lemma on real numbers introduced by Sabach and Shtern \cite{SabSht17},
that was originally used there to obtain the linear asymptotic regularity of a viscosity-type Halpern iteration,
and was recently employed to the same end for a variety of iterations \cite{CheKohLeu23, LeuPin23, CheLeu23}.

The following is a particular case of \cite[Lemma~3]{SabSht17}, as reformulated in \cite{LeuPin23}.
\begin{lemma}\label{lem:Sabach-Shtern}
    Let $L > 0$, 
    and define, for all $n \in \N$, $a_n = \frac{2}{n + 2}$. 
    Suppose $(s_n)$ is a sequence of nonnegative reals such that $s_0 \leq L$ and
    that 
    \begin{align}
        s_{n + 1} \leq (1 - a_{n + 1}) s_n + (a_n - a_{n + 1}) L \label{eq:Sabach-Shtern}
    \end{align}
    for all $n \in \N$. Then, 
    \begin{align*}
        s_n \leq \frac{2 L}{n + 2}
    \end{align*}
    for all $n \in \N$.
\end{lemma}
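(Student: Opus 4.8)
The plan is to prove the bound $s_n \leq \frac{2L}{n+2}$ by induction on $n \in \N$. The base case $n = 0$ is immediate, since the claimed inequality then reads $s_0 \leq \frac{2L}{0+2} = L$, which is precisely the hypothesis $s_0 \leq L$.

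For the inductive step I would assume $s_n \leq \frac{2L}{n+2}$ and aim to establish $s_{n+1} \leq \frac{2L}{n+3}$. The first task is to rewrite the two coefficients appearing in \eqref{eq:Sabach-Shtern} explicitly in terms of $n$: from $a_{n+1} = \frac{2}{n+3}$ one obtains $1 - a_{n+1} = \frac{n+1}{n+3}$, while $a_n - a_{n+1} = \frac{2}{n+2} - \frac{2}{n+3} = \frac{2}{(n+2)(n+3)}$. I would note in particular that $1 - a_{n+1} \geq 0$ for every $n \in \N$; this is exactly what allows the inductive upper bound on $s_n$ to be substituted into \eqref{eq:Sabach-Shtern} without reversing the direction of the inequality.

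Inserting the inductive hypothesis together with these coefficients into \eqref{eq:Sabach-Shtern} then yields
\[
    s_{n+1} \leq \frac{n+1}{n+3}\cdot\frac{2L}{n+2} + \frac{2L}{(n+2)(n+3)} = \frac{2L(n+1) + 2L}{(n+2)(n+3)} = \frac{2L(n+2)}{(n+2)(n+3)} = \frac{2L}{n+3},
\]
which is exactly the desired bound at index $n+1$, closing the induction.

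Because the argument is an elementary induction with a telescoping flavour, I do not anticipate any genuine obstacle. The only points deserving a moment's attention are verifying the sign condition $1 - a_{n+1} \geq 0$, so that replacing $s_n$ by its inductive upper bound is legitimate, and checking that the numerator $2L(n+1) + 2L$ collapses to $2L(n+2)$, producing the exact cancellation of the factor $n+2$ that gives the clean bound $\frac{2L}{n+3}$.
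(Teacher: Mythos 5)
Your induction is correct: the base case matches the hypothesis $s_0 \leq L$ exactly, the coefficient computations $1 - a_{n+1} = \frac{n+1}{n+3}$ and $a_n - a_{n+1} = \frac{2}{(n+2)(n+3)}$ are right, the nonnegativity of $1 - a_{n+1}$ justifies substituting the inductive bound, and the numerator indeed collapses to give $\frac{2L}{n+3}$. The paper itself gives no proof of this lemma --- it is quoted as a particular case of Lemma~3 of Sabach and Shtern, as reformulated by Leu\c{s}tean and Pinto --- so your argument cannot be compared against an in-paper proof; what you have produced is a clean, self-contained elementary verification of the cited special case, which is essentially the standard telescoping induction underlying the general result. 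No gaps.
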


\begin{theorem}
    Let $\beta_n = 1 - \frac{2}{n + 2}$, $\lambda_n = \lambda \in (0, 1)$ and $\gamma_n = \frac{n + 3}{n + 2}$
    and suppose the family $(T_n)$ satisfies \eqref{eq:jP2-consequence} with respect to $(\gamma_n)$.
    Then, the iteration \eqref{eq:kmf} with parameters $(\beta_n)$ and $(\lambda_n)$ satisfies, for all $n, m \in \N$,
    \begin{align*}
        d(x_n, x_{n + 1}) &\leq \frac{6M}{n + 2}; \\
        d(x_n, T_n x_n) &\leq \frac{10M}{\lambda(n + 2)}; \\
        d(x_n, T_m x_n) &\leq \frac{20M}{\lambda(n + 2)}.
    \end{align*}
    Thus, the mappings $k \mapsto 6M(k + 1) - 2$ , 
    $k \mapsto 10M\ceil{\frac{1}{\lambda}}(k + 1) - 2$ and 
    $k \mapsto 20M\ceil{\frac{1}{\lambda}}(k + 1) - 2$
    are rates of 
    asymptotic, $(T_n)$-asymptotic and $T_m$-asymptotic regularity for $(x_n)$, respectively.
\end{theorem}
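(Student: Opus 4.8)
The plan is to reduce the first inequality to a single application of the Sabach–Shtern lemma (Lemma~\ref{lem:Sabach-Shtern}) with $L = 3M$, and then to obtain the remaining two inequalities as direct consequences. First I would set $s_n = d(x_n, x_{n+1})$ and $a_n = \frac{2}{n+2}$, observing that the chosen $\beta_n = 1 - \frac{2}{n+2}$ gives precisely $1 - \beta_n = a_n$, hence $\beta_{n+1} = 1 - a_{n+1}$. Starting from the recursive inequality of Lemma~\ref{lem:rec-bounds-ar}.\eqref{lem:rec-bounds:x-succ-x}, the constant choice $\lambda_n = \lambda$ annihilates the term $\abs{\lambda_{n+1} - \lambda_n}$, so it remains to show that the surviving error $d(T_{n+1}u_n, T_n u_n) + 2M\abs{\beta_{n+1} - \beta_n}$ is bounded by $(a_n - a_{n+1})L$ with $L = 3M$, which places us exactly in the form \eqref{eq:Sabach-Shtern}.

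The key computations run as follows. Since $(a_n)$ is decreasing, $\abs{\beta_{n+1} - \beta_n} = a_n - a_{n+1} = \frac{2}{(n+2)(n+3)}$. For the operator term I would apply the hypothesis \eqref{eq:jP2-consequence} at $x = u_n$ with $m = n+1$: using $\gamma_n = 1 + \frac{1}{n+2} \geq 1$ (so $\frac{1}{\gamma_n} \leq 1$), the identity $\abs{\gamma_{n+1} - \gamma_n} = \frac{1}{(n+2)(n+3)}$, and the bound $d(u_n, T_n u_n) \leq 2M$ from Lemma~\ref{lem:basic-bounds}.\ref{lem:basic-bounds:Tu-u}, one obtains $d(T_{n+1}u_n, T_n u_n) \leq \frac{2M}{(n+2)(n+3)} = M(a_n - a_{n+1})$. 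Adding $2M\abs{\beta_{n+1} - \beta_n} = 2M(a_n - a_{n+1})$ yields a total error of exactly $3M(a_n - a_{n+1})$, matching the right-hand side of \eqref{eq:Sabach-Shtern} with $L = 3M$. The initial condition is checked via $s_0 = d(x_0, x_1) \leq d(x_0, p) + d(p, x_1) \leq 2M \leq 3M$ by Lemma~\ref{lem:basic-bounds}.\ref{lem:basic-bounds:x-p}. Lemma~\ref{lem:Sabach-Shtern} then delivers $d(x_n, x_{n+1}) \leq \frac{2 \cdot 3M}{n+2} = \frac{6M}{n+2}$.

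For the second inequality I would invoke Lemma~\ref{lem:rec-bounds-T-ar}.\ref{lem:rec-bounds:x-Tx-with-lambda}, giving $\lambda d(x_n, T_n x_n) \leq d(x_n, x_{n+1}) + 2M(1 - \beta_n)$; substituting the bound just proved together with $1 - \beta_n = \frac{2}{n+2}$ produces $\lambda d(x_n, T_n x_n) \leq \frac{6M}{n+2} + \frac{4M}{n+2} = \frac{10M}{n+2}$, whence $d(x_n, T_n x_n) \leq \frac{10M}{\lambda(n+2)}$. For the third, I would use the triangle inequality $d(x_n, T_m x_n) \leq d(x_n, T_n x_n) + d(T_n x_n, T_m x_n)$ and bound the second summand via \eqref{eq:jP2-consequence} applied at $x = x_n$; since $\frac{\abs{\gamma_m - \gamma_n}}{\gamma_n} \leq 1$ (because $\abs{\gamma_m - \gamma_n} < \frac{1}{2} \leq \gamma_n$), this gives $d(T_n x_n, T_m x_n) \leq d(x_n, T_n x_n) \leq \frac{10M}{\lambda(n+2)}$, and summing the two contributions yields $\frac{20M}{\lambda(n+2)}$.

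Finally, the stated rates follow by routine inversion: $\frac{6M}{n+2} \leq \frac{1}{k+1}$ once $n \geq 6M(k+1) - 2$, and analogously for the other two after replacing $\frac{1}{\lambda}$ by $\ceil{\frac{1}{\lambda}}$ to stay within $\N$. I expect no serious obstacle; the only delicate point is recognizing the precise value $L = 3M$ and verifying that both error sources—the operator discrepancy $d(T_{n+1}u_n, T_n u_n)$ and the parameter increment $\abs{\beta_{n+1} - \beta_n}$—each scale exactly like $a_n - a_{n+1}$, so that the recursion collapses onto \eqref{eq:Sabach-Shtern} exactly rather than merely being dominated by it up to a constant, which is what makes the sharp linear rate available.
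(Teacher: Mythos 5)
Your proposal is correct and follows essentially the same route as the paper: reduce the recursion of Lemma~\ref{lem:rec-bounds-ar} to the Sabach--Shtern form with $L = 3M$ (you bound $1/\gamma_n \leq 1$ where the paper keeps $\tfrac{n+2}{n+3}$, but both land at the same constant), then derive the second bound from Lemma~\ref{lem:rec-bounds-T-ar} and the third from \eqref{eq:jP2-consequence} with $\frac{\abs{\gamma_m - \gamma_n}}{\gamma_n} \leq 1$. You even add the explicit check $s_0 \leq L$, which the paper leaves implicit.
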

\begin{proof}
    For the first inequality, we will apply Lemma \ref{lem:Sabach-Shtern} with 
    \begin{align*}
        s_n &= d(x_n, x_{n + 1}), \\ 
        a_n &= 1 - \beta_n, \\ 
        L &= 3M.
    \end{align*}
    Note first that 
    \begin{align*}
        \abs{\gamma_{n + 1} - \gamma_n} &= \frac{1}{(n + 2)(n + 3)}, \\
        \frac{\abs{\gamma_{n + 1} - \gamma_n}}{\gamma_n} &= \frac{n + 2}{n + 3} \cdot \frac{1}{(n + 2)(n + 3)} = \frac{1}{(n + 3)^2}
        \intertext{and that} 
        \beta_{n + 1} - \beta_n &= \frac{2}{(n + 2)(n + 3)}.
    \end{align*}
    We now show that the main condition \eqref{eq:Sabach-Shtern} is satisfied.
    By Lemma \ref{lem:rec-bounds-ar} and \eqref{eq:jP2-consequence}, for all $n \in \N$,
    \begin{align*} 
        s_{n + 1} &\leq \beta_{n + 1} s_n + 2M(\beta_{n + 1} - \beta_n) + d(T_{n + 1} u_n, T_n u_n) \\ 
        &\leq \beta_{n + 1} s_n + 2M(\beta_{n + 1} - \beta_n) + \frac{\abs{\gamma_{n + 1} - \gamma_n}}{\gamma_n} d(T_n u_n, u_n) \\
        &\leq \beta_{n + 1} s_n + 2M(\beta_{n + 1} - \beta_n) + 2M \frac{\abs{\gamma_{n + 1} - \gamma_n}}{\gamma_n} \\
        &= \beta_{n + 1} s_n + (\beta_{n + 1} - \beta_n) \left(2M + 2M \frac{\abs{\gamma_{n + 1} - \gamma_n}}{\gamma_n (\beta_{n + 1} - \beta_n)}\right) \\
        &= \beta_{n + 1} s_n + (\beta_{n + 1} - \beta_n) \left( 2M + \frac{2M}{(n + 3)^2} \cdot \frac{(n + 2)(n + 3)}{2} \right) \\
        &= \beta_{n + 1} s_n + (\beta_{n + 1} - \beta_n) \left( 2M + M\frac{n + 2}{n + 3} \right) \\
        &\leq \beta_{n + 1} s_n + (\beta_{n + 1} - \beta_n) L \\
    \end{align*}
    thus proving the first claim. 
    For the second claim,
    using the previous and Lemma \ref{lem:rec-bounds-T-ar}.\ref{lem:rec-bounds:x-Tx-with-lambda}, we have
    \begin{align*}
        d(x_n, T_n x_n) &\leq \frac{1}{\lambda} d(x_n, x_{n + 1}) + \frac{4M}{\lambda(n + 2)} \\ 
        &\leq \frac{6M}{\lambda(n + 2)} + \frac{4M}{\lambda(n + 2)} = \frac{10M}{\lambda(n + 2)}.
    \end{align*}
    Finally, 
    \begin{align*}
        d(x_n, T_m x_n) 
        &\leq d(x_n, T_n x_n) + \frac{\abs{\gamma_n - \gamma_m}}{\gamma_m} d(x_n, T_n x_n) \\ 
        &= d(x_n, T_n x_n) + \frac{\abs{n - m}}{(m + 2)(n + 3)} d(x_n, T_n x_n) \\ 
        &\leq 2 d(x_n, T_n x_n) 
    \end{align*}
    and the claim follows from the previous result.

\end{proof}

    




\subsection{Relation to the modified Halpern iteration}

In this section we show that the relation between 
the modified Halpern, originally introduced in \cite{KimXu05}, and the Tikhonov-Mann iterations studied in \cite{CheKohLeu23}
for the single mapping case extends to families of mappings: the ($(T_n)$-)asymptotic regularity of one iteration 
implies that of the other, with an explicit translation of rates.

The modified Halpern iteration as defined for $W$-hyperbolic spaces in \cite{CheKohLeu23}
can naturally be extended to families of mappings as follows. 
Define, for all $n \in \N$,
\begin{align*}
    y_{n + 1} &= (1 - \beta_{n + 1}) u + \beta_{n + 1} v_n, \quad\text{where} \\ 
    v_n &= (1 - \lambda_n) y_n + \lambda_n T_n y_n.
\end{align*}

\begin{proposition}
    Assume that $y_0 = (1 - \beta_0) u + \beta_0 x_0$. 
    Then, for all $n \in \N$,
    \begin{align*}
        u_n = y_n \quad \text{and} \quad x_{n + 1} = v_n.
    \end{align*}
\end{proposition}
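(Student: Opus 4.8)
The plan is to prove the two identities $u_n = y_n$ and $x_{n+1} = v_n$ simultaneously by induction on $n$, exploiting the fact that the defining recurrences of the two iterations are built from the very same pair of convex-combination operations---one weighted by $\beta$ and anchored at $u$, the other weighted by $\lambda$ and involving $T_n$---merely applied in the opposite order. The crucial structural point is that the two identities feed one another: equality of the $\beta$-combinations forces equality of the subsequent $\lambda$-combinations, and conversely, so the argument proceeds along an interleaved chain rather than as two independent inductions.

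For the base case I would first check $u_0 = y_0$. By the definition \eqref{eq:dfn-tmf-u} we have $u_0 = (1 - \beta_0) u + \beta_0 x_0$, which is precisely the hypothesis imposed on $y_0$; hence $u_0 = y_0$. From this, comparing $x_1 = (1 - \lambda_0) u_0 + \lambda_0 T_0 u_0$ with $v_0 = (1 - \lambda_0) y_0 + \lambda_0 T_0 y_0$ and substituting $u_0 = y_0$ yields $x_1 = v_0$, establishing both identities at $n = 0$.

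For the inductive step, assume $u_n = y_n$ and $x_{n+1} = v_n$. To obtain $u_{n+1} = y_{n+1}$, I would compare $u_{n+1} = (1 - \beta_{n+1}) u + \beta_{n+1} x_{n+1}$ with $y_{n+1} = (1 - \beta_{n+1}) u + \beta_{n+1} v_n$ and invoke the inductive hypothesis $x_{n+1} = v_n$. Then, to obtain $x_{n+2} = v_{n+1}$, I would compare $x_{n+2} = (1 - \lambda_{n+1}) u_{n+1} + \lambda_{n+1} T_{n+1} u_{n+1}$ with $v_{n+1} = (1 - \lambda_{n+1}) y_{n+1} + \lambda_{n+1} T_{n+1} y_{n+1}$, substituting the just-established $u_{n+1} = y_{n+1}$. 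This closes the induction.

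There is no serious obstacle here; the argument is a routine interleaved induction requiring no appeal to the $W$-hyperbolic axioms beyond the notational convention for $W$. The only point demanding care is the bookkeeping of the two intertwined statements---ensuring that the base-case assumption on $y_0$ is used precisely to launch the chain $u_0 = y_0 \Rightarrow x_1 = v_0 \Rightarrow u_1 = y_1 \Rightarrow \cdots$---rather than attempting to verify either identity in isolation, which would not close on its own.
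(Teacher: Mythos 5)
Your interleaved induction is correct and is exactly the argument the paper intends, which it gives only by reference to the single-mapping case in \cite[Proposition~3.2]{CheKohLeu23} with $T$ replaced by $T_n$. No discrepancies.
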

\begin{proof}
    The proof goes by induction on $n$ just like the proof of \cite[Proposition~3.2]{CheKohLeu23},
    replacing $T$ with $T_n$.
\end{proof}



\begin{proposition}
    Assume that $(\cConvBeta)$ holds,
    that $y_0 = (1 - \beta_0) u + \beta_0 x_0$,
    define 
    \begin{align*}
        \alpha(k) = \eta(2M(k + 1) - 1),
    \end{align*}
    and suppose $\Sigma$ is a rate of ($(T_n)$)-asymptotic 
    regularity for one of the sequences $(x_n)$ or $(y_n)$.
    Then, the other sequence is ($(T_n)$-) asymptotically regular 
    with rate 
    \begin{align*}
        \Sigma'(k) = \max\set{\alpha(3k + 2), \Sigma(3k + 2)}
    \end{align*}
\end{proposition}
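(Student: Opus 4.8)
The plan is to reduce everything to a single termwise comparison between the two sequences, namely
\[
    d(x_n, y_n) \leq 2M(1 - \beta_n) \qquad \text{for all } n \in \N,
\]
and to observe that $\alpha$ is exactly a rate of convergence to $0$ for this quantity. To establish the displayed bound I would invoke the preceding proposition to rewrite $y_n$ as $u_n = (1 - \beta_n) u + \beta_n x_n$; then Proposition~\ref{prop:W-derived}.\ref{prop:W-dervied:dist-to-endpoint-right} gives $d(x_n, u_n) = (1 - \beta_n) d(u, x_n)$, and Lemma~\ref{lem:basic-bounds}.\ref{lem:basic-bounds:x-u} gives $d(u, x_n) \leq 2M$, yielding the claim. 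Since $\eta$ is a rate of convergence for $\lim_n \beta_n = 1$, for $n \geq \alpha(k) = \eta(2M(k+1) - 1)$ we get $1 - \beta_n \leq \frac{1}{2M(k+1)}$ and hence $d(x_n, y_n) \leq \frac{1}{k+1}$, so $\alpha$ is indeed a rate of convergence to $0$ for $(d(x_n, y_n))$.

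For the asymptotic regularity statement, assume $\Sigma$ is a rate of asymptotic regularity for $(x_n)$ and fix $k \in \N$ and $n \geq \Sigma'(k)$. I would split by the triangle inequality,
\[
    d(y_n, y_{n+1}) \leq d(y_n, x_n) + d(x_n, x_{n+1}) + d(x_{n+1}, y_{n+1}),
\]
and bound each summand by $\frac{1}{3k+3}$: the first and third because $n, n+1 \geq \alpha(3k+2)$ and $\alpha$ is a rate for $(d(x_n, y_n))$, the middle because $n \geq \Sigma(3k+2)$. Summing gives $d(y_n, y_{n+1}) \leq \frac{3}{3k+3} = \frac{1}{k+1}$, so $\Sigma'$ is a rate of asymptotic regularity for $(y_n)$. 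The reverse implication, from $(y_n)$ to $(x_n)$, is identical since the bound $d(x_n, y_n) \leq 2M(1-\beta_n)$ is symmetric in the two sequences.

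For the $(T_n)$-asymptotic regularity statement, assume $\Sigma$ is a rate of $(T_n)$-asymptotic regularity for $(x_n)$, and use the triangle inequality together with the nonexpansiveness of $T_n$ to obtain
\[
    d(y_n, T_n y_n) \leq d(y_n, x_n) + d(x_n, T_n x_n) + d(T_n x_n, T_n y_n) \leq 2 d(x_n, y_n) + d(x_n, T_n x_n).
\]
For $n \geq \Sigma'(k)$ we have $d(x_n, y_n) \leq \frac{1}{3k+3}$ (from $n \geq \alpha(3k+2)$) and $d(x_n, T_n x_n) \leq \frac{1}{3k+3}$ (from $n \geq \Sigma(3k+2)$), so $d(y_n, T_n y_n) \leq \frac{2}{3k+3} + \frac{1}{3k+3} = \frac{1}{k+1}$; symmetry again handles the direction from $(y_n)$ to $(x_n)$. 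The only point requiring care is that the single rate $\Sigma'$, built from the common split $3k+2$, must serve both the asymptotic and the $(T_n)$-asymptotic cases simultaneously: this works because in the former the three unit contributions add to $\frac{3}{3k+3}$, while in the latter the factor $2$ in front of $d(x_n, y_n)$ is exactly compensated by there being only two distinct summands rather than three. I do not anticipate a serious obstacle beyond tracking these constants correctly, since the entire content is the termwise estimate $d(x_n, y_n) \leq 2M(1 - \beta_n)$ furnished by the preceding proposition and condition $(\cConvBeta)$.
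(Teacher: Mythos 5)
Your proposal is correct and follows essentially the same route as the paper: the paper's proof likewise reduces everything to the estimate $d(x_n, u_n) = (1-\beta_n)d(x_n,u) \leq 2M(1-\beta_n)$ (with $u_n = y_n$ by the preceding proposition), obtains the rate $\alpha$ for $d(x_n,y_n) \to 0$ from $(\cConvBeta)$, and then defers the triangle-inequality bookkeeping to \cite[Proposition~4.2]{CheKohLeu23} --- which is exactly the splitting you carry out explicitly. Your accounting of the constants in both the asymptotic and $(T_n)$-asymptotic cases is accurate.
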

\begin{proof}
    Noting that 
    \begin{align*}
        d(x_n, u_n) \stackrel{\text{Proposition~\ref{prop:W-derived}.\ref{prop:W-derived:common-left-endpoint}}}{=} (1 - \beta_n) d(x_n, u) 
        \stackrel{\text{Lemma~\ref{lem:basic-bounds}.\ref{lem:basic-bounds:x-u}}}{\leq} (1 - \beta_n) 2M,  
    \end{align*}
    the fact that $\limn d(x_n, u_n) = 0$ with rate of convergence $\alpha$ is proved as in \cite[Lemma~4.1]{CheKohLeu23}.
    Using this, the fact that $\Sigma'$ is a rate of ($(T_n)$-)asymptotic regularity is 
    proved the same as in \cite[Proposition~4.2]{CheKohLeu23}, replacing $T$ with $T_n$.
\end{proof}


\newcommand{\res}[2]{\operatorname{J}_{{#1}{#2}}}
\newcommand{\gr}[1]{\operatorname{gra}(#1)}
\newcommand{\inner}[2]{\left\langle#1, #2\right\rangle}
\newcommand{\zer}[1]{\operatorname{zer}(#1)}

\section{Rates of asymptotic regularity for the Tikhonov-forward-backward algorithm with variable step-size}

Let us first recall some notions from convex optimization and monotone operator theory.
Let $H$ be a Hilbert space. For a set-valued operator $A : H \rightrightarrows H$,
its graph is $\gr{A} = \set{(x, u) \in H \mid u \in A x}$ and 
its set of zeroes is $\zer{A} = \set{x \in H \mid 0 \in A x}$.
Given $A : H \rightrightarrows H$, the inverse of $A$ is the operator $A^{-1} : H \rightrightarrows H$, $A^{-1} u = \set{x \mid u \in A x}$
and the scaling of $A$ by $\gamma \in \R$ is the operator $\gamma A : H \rightrightarrows H$, $(\gamma A) x = \set{\gamma u \mid u \in A x}$.
Given two set-valued operators $A, B : H \rightrightarrows H$,
their sum, $A + B : H \rightrightarrows H$, is defined by $(A + B)x = \set{u + v \mid u \in A x, v \in B x}$
and their composition, $A B : H \rightrightarrows H$, by $(A B) x = \set{v \in H \mid \text{ there exists } u \in A x \text{ such that } v \in B u}$.

We say that $A : H \rightrightarrows H$ is \emph{monotone} if, for all $(x, u), (y, v) \in \gr{A}$, 
we have that $\inner{x - y}{u - v} \geq 0$. $A$ is called \emph{maximally monotone} if 
there exists no other monotone operator $B : H \rightrightarrows H$ 
such that $\gr{A} \subsetneq \gr{B}$. 
For a maximally monotone operator $A$, its \emph{resolvent} of order $\gamma > 0$ is defined by   
\begin{align*}
    \res{\gamma}{A} = (\Id + \gamma A)^{-1}.
\end{align*}
and it is known to be a single-valued, nonexpansive mapping,
where $\Id : H \to H$ is the identity mapping.
A single-valued operator $B : H \to H$ is said to be $\beta$-cocoercive for some $\beta > 0$ if,
for all $x, y \in H$, $\inner{x - y}{B x - B y} \geq \beta\norm{T x - T y}^2$.

The \emph{forward-backward} algorithm is one of the procedures widely employed for finding 
a point in $\zer{A + B}$ where $A$ is maximally monotone and $B$ is cocoercive. 
We refer to \cite{BauCom10} for a more detailed account of this algorithm and the theory of monotone operators in general.

Based on their strongly convergent Krasnoselskii-Mann iteration for families of mappings, 
the authors of \cite{BotMei21} define a version of the forward-backward algorithm 
with variable step-size and prove its strong convergence. 
In this section, we give a generalized verion of this iteration based on \eqref{eq:dfn-tmf-x} 
and compute rates of ($(T_n)$-)asymptotic regularity for it. 

Let in the following $A : H \rightrightarrows H$ be maximally monotone and $B : H \to H$ be $\beta$-cocoercive for some $\beta > 0$. 
The Tikhonov-forward-backward algorithm with variable step size associated with $A$ and $B$ is defined by
\begin{align*}
    x_{n + 1} &= (1 - \lambda_n) u_n + \lambda_n \res{\gamma_n}{A} (u_n - \gamma_n B u_n), \quad\text{where} \\ 
    u_n &= (1 - \beta_n) u + \beta_n x_n, 
\end{align*} 
where $(\beta_n) \subset (0, 1]$, $(\gamma_n) \subset (0, 2\beta)$ and 
$\lambda_n \subset (0, \frac{1}{\alpha_n}]$.

\begin{corollary}
    Assume $(\cConvProdBeta)$, $(\cCauchyBeta)$, $(\cCauchyLambda)$, $(\cCauchyGamma)$, $(\cLimInfGamma)$ are satisfied. 
    Let $\rCauchyT : \N \to \N$ be defined by:
    \begin{align}
        \rCauchyT(k) = \max\set{N_\Gamma, \rCauchyGamma(2M\Gamma(k + 1) - 1)}. \label{eq:cor-fb-cauchy-T}
    \end{align}
    Then $(x_n)$ is asymptotically regular with rate $\notProd{\Sigma}$ 
    as defined as in Theorem~\ref{thm:general-ar-rates}.

        

\end{corollary}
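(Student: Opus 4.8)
The plan is to realize the Tikhonov-forward-backward iteration as a particular instance of the Tikhonov-Mann iteration \eqref{eq:dfn-tmf-x}--\eqref{eq:dfn-tmf-u} and then to invoke Theorem~\ref{thm:general-ar-rates}. Concretely, for each $n \in \N$ I would set
\[
    T_n = \res{\gamma_n}{A}(\Id - \gamma_n B),
\]
so that the recurrence defining $(x_n)$ becomes exactly \eqref{eq:dfn-tmf-x} with these operators and the given $(\lambda_n)$, $(\beta_n)$, $u$, $x_0$. A direct computation shows that $x \in \operatorname{Fix}(T_n)$ if and only if $-Bx \in Ax$, i.e. $0 \in (A + B)x$, independently of $\gamma_n$; hence $F = \bigcap_{n} \operatorname{Fix}(T_n) = \zer{A + B}$. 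As in the rest of the paper this set is assumed nonempty, so we may fix $p \in F$ and define $M$ by \eqref{eq:dfn-max}.

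Two structural properties of $(T_n)$ then feed the general machinery. First, each $T_n$ is nonexpansive: since $B$ is $\beta$-cocoercive and $\gamma_n \in (0, 2\beta)$, the forward step $\Id - \gamma_n B$ is nonexpansive, and $\res{\gamma_n}{A}$ is nonexpansive by maximal monotonicity of $A$, so their composition is nonexpansive. Second --- and this is the crux --- I would check that $(T_n)$ satisfies \eqref{eq:jP2-consequence} with respect to $(\gamma_n)$. Writing $y = T_m x$ and $z = T_n x$, the defining inclusions of the resolvents give $\frac{x - y}{\gamma_m} - Bx \in Ay$ and $\frac{x - z}{\gamma_n} - Bx \in Az$. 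The forward-step terms $Bx$ coincide, so they cancel when monotonicity of $A$ is applied to these two inclusions, leaving
\[
    \inner{y - z}{\tfrac{x - y}{\gamma_m} - \tfrac{x - z}{\gamma_n}} \geq 0.
\]
Clearing denominators and regrouping yields $\gamma_n\norm{y - z}^2 \leq (\gamma_n - \gamma_m)\inner{y - z}{x - z}$, whence by Cauchy--Schwarz $\norm{T_m x - T_n x} \leq \frac{\abs{\gamma_m - \gamma_n}}{\gamma_n}\norm{x - T_n x}$, which is precisely \eqref{eq:jP2-consequence}.

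With these facts in hand the remaining steps are formal. Conditions $(\cCauchyGamma)$ and $(\cLimInfGamma)$ together with \eqref{eq:jP2-consequence} place us in the hypotheses of Proposition~\ref{prop:cauchy-T-of-jP2-consequence}, which yields that $(T_n)$ satisfies $(\cCauchyT)$ with the modulus $\rCauchyT$ of \eqref{eq:cor-fb-cauchy-T}. Since $(\cConvProdBeta)$, $(\cCauchyBeta)$, $(\cCauchyLambda)$ are assumed and $(\cCauchyT)$ has now been established, all hypotheses of Theorem~\ref{thm:general-ar-rates} hold, and the asymptotic regularity of $(x_n)$ with rate $\notProd{\rAR}$ follows at once. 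The only genuine obstacle is the monotone-operator estimate establishing \eqref{eq:jP2-consequence}; the point that makes it go through cleanly is that the perturbations $\gamma_n Bx$ enter both inclusions identically and cancel under monotonicity, so that the bound reduces to the classical resolvent comparison inequality and cocoercivity is needed only to guarantee nonexpansiveness of the $T_n$.
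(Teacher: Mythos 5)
Your proposal is correct and follows the same route as the paper: set $T_n = \res{\gamma_n}{A}(\Id - \gamma_n B)$, verify nonexpansiveness and condition \eqref{eq:jP2-consequence}, then chain Proposition~\ref{prop:cauchy-T-of-jP2-consequence} into Theorem~\ref{thm:general-ar-rates}. The only difference is that where the paper cites \cite[Proposition~26.1.(v)]{BauCom10} for nonexpansiveness and \cite[Lemma~3.2]{BotMei21} for \eqref{eq:jP2-consequence}, you prove both facts directly (and your monotonicity/Cauchy--Schwarz derivation of the resolvent comparison inequality is correct), which makes the argument self-contained but does not change its substance.
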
 
\begin{proof}
    For any $n \in \N$, define $T_n = \res{\gamma_n}{A} (\Id - \gamma_n B)$,
    so that $x_n$ can be written as 
    \begin{align*}
        x_{n + 1} = (1 - \lambda_n) u_n + \lambda_n T_n u_n. 
    \end{align*}
    It is known \cite[Proposition~26.1.(v)]{BauCom10} that $T_n$ is $\frac{2\beta}{4\beta - \gamma_n}$-averaged,
    and hence nonexpansive. 
    Lemma~3.2 of \cite{BotMei21} then shows that $(T_n)$ satisfies \eqref{eq:jP2-consequence} with respect to $(\gamma_n)$,
    and thus, by Proposition~\ref{prop:cauchy-T-of-jP2-consequence}, $(\cCauchyT)$ holds with 
    $\rCauchyT$.
    All the hypotheses to apply Theorem \ref{thm:general-ar-rates} are thus satisfied, and hence the result follows.
\end{proof}

\section{Acknowledgements}

The author is grateful to Lauren\c{t}iu Leu\c{s}tean for initially suggesting the investigation of the topic herein
and for the later comments that improved the paper. 


\bibliographystyle{plainurl}

\end{document}